\newcommand{\Per}{\mbox{Per}}
\newcommand{\Aper}{\mbox{Aper}}
\newcommand{\Image}{\mbox{Im}}
\newcommand{\bfu}{\textit{\textbf{u}}}
\newcommand{\bfx}{\textit{\textbf{x}}}
\newcommand{\bfv}{\textit{\textbf{v}}}
\theoremstyle{plain}
\newtheorem{theorem}{Theorem}
\newtheorem{proposition}[theorem]{Proposition}
\newtheorem{lemma}[theorem]{Lemma} 
\newtheorem{corollary}[theorem]{Corollary}
\theoremstyle{definition}
\newtheorem{definition}[theorem]{Definition}
\newtheorem{example}[theorem]{Example}
\newcommand{\Z}{\mathbb Z}
\newcommand{\N}{\mathbb N}
\newcommand{\I}{\mathbb I}
\DeclareMathOperator{\id}{id}
\title{Semicocycle discontinuities for substitutions and reverse-reading automata}
\author{Gandhar Joshi }
\address{School of Mathematics and Statistics, The Open University, U.K.}
\email{gandhar.joshi@open.ac.uk}
\author{Reem Yassawi}
\address{   School of Mathematical Sciences, Queen Mary University of London, UK
}
\email{r.yassawi@qmul.ac.uk}
\thanks{This work was supported by  the
   EPSRC  grant number EP/V007459/1.}
\begin{document}
	\maketitle

\begin{abstract}
In this article we define the   semigroup associated to a primitive substitution. We use it 
to
 construct a minimal automaton which generates a substitution sequence ${\bfu}$ in reverse reading. We show, in the case where the substitution has a coincidence, that  this automaton completely describes the {\em semicocycle discontinuities} of ${\bfu}$.
\end{abstract}

	{\em In Memoriam}: Uwe Grimm died unexpectedly  in October 2021. He was the first author's supervisor and the second author's close colleague and dear friend. His loss is deeply felt by both; we  dedicate  this paper to his memory.
	
	\section{Introduction}
		Automatic sequences are codings of constant length-$\ell$ substitutional fixed points. The latter generate discrete dynamical systems whose study is  ubiquitous in the literature. Cobham \cite{Cobham} showed that an equivalent definition of an automatic sequence is that it is a 
	 sequence generated by a {\em finite automaton with output} (Definition \ref{DFA, DFAO}).  To generate $u_n$, the $n$-th entry of the sequence, one feeds the base-$\ell$ expansion  $(n)_\ell$ of $n$ into the automaton, to arrive at a final state, and $u_n$ is the coding, or  output, of the final state.  One has to set the order of reading of $(n)_\ell$, i.e., whether one reads starting with the most significant digit, in which case the reading is called {\em direct}, or starting with the least significant digit, in which case the reading is called  {\em reverse}.
	A fixed automatic sequence can be generated in either direct or reverse reading, but in general, generating automata will be different. See Section \ref{sec:preliminaries} for definitions and background.
	
	The relevance of the (minimal) direct-reading automaton in dynamics has long been understood. Indeed, Cobham's proof tells us that we can define this automaton directly in terms of the substitution rule. Furthermore, this direct-reading automaton also enables us to locate the irregular fibres of the maximal equicontinuous factor \cite{CQY}, and this is useful as these fibres  drive the interesting dynamics in the shift. Cobham defined automatic sequences in terms of direct reading automata, and he did not consider reverse-reading  automata. Eilenberg identified that reverse-reading automata gave information about the {\em $\ell$-kernel} of a sequence $\bfu$, that is, the set of all subsequences of $\bfu$ whose indices run along an arithmetic progression and such that its difference is a power of $\ell$.
Furthermore, the cardinality of the $\ell$-kernel is also the size of a minimal automaton that generates $\bfu$ in reverse reading (Theorem \ref{Eilenberg}). The reverse-reading automaton has been used extensively to generate algebraic characterisations of automatic sequences, in particular when their entries belong to a field, \cite{Christol} or even sometimes a ring \cite{Denef-Lipshitz}. However there has been little or no use of the reverse-reading automaton in dynamics.
	
	This paper arose out of a desire to understand the dynamical meaning of the $\ell$-kernel, and also to understand what the reverse-reading automaton tells us about the dynamical system generated by $\bfu$. For a special class of shift-dynamical systems, namely the {\em Toeplitz shifts} (see Section \ref{sec:odometer}), the $\ell$-kernel  has a familiar dynamical interpretation.  By Theorem \ref{thm:Toeplitz}, Toeplitz shifts are guaranteed to contain at least one {\em Toeplitz  sequence},  i.e.,  a sequence which   is constructed by filling in entries of   one arithmetic progression of indices at a time, with a constant symbol, in a way such that eventually the sequence is completely specified.  The first such sequences were studied by 
	Oxtoby \cite{Oxtoby}, who defined a  Toeplitz sequence that generates a minimal shift supporting two invariant measures. 
	
	At the other end, Jacobs and Keane \cite{Jacobs-Keane},  introduced what is possibly the simplest nontrivial Toeplitz sequence, the well-known one-sided {\em period-doubling sequence}. It  is a substitutional fixed point, and  generates a uniquely ergodic shift which has very little ``irregularity" as follows.	Let $(X,\sigma)$ be a  Toeplitz shift, where $X\subset \mathcal A^{\I}$ with $\mathcal A$ a finite set, and where $\I$ equals $\Z$ or $\N_0$. Not all elements in $X$ are Toeplitz sequences.  How far away a sequence $(x_n)_{n\in \I}$ is from being Toeplitz is a function of the cardinality of  the set of discontinuities  of the  {\em semicocycle} that it defines; see Definition \ref{def:semicocycle}. 	 The discontinuities of the semicocycle are strongly  linked to the values that it  takes on arithmetic progressions; see  Lemma \ref{lem:semicocycle}. 
   In Example \ref{pd_ex} we show that
 the one-sided period-doubling sequence defines a  continuous semicocycle on $\mathbb N_0$, but any of its extensions to a two-sided sequence defines a semicocycle on $\Z$ with only one discontinuity, at $n=-1$. In general, Toeplitz shifts will contain non-Toeplitz elements whose semicocycle has far more discontinuities; see  Example \ref{bigdiag_rrex}.

	Williams \cite{Williams} carried out the first systematic  study of  Toeplitz shifts. Given a point $\bfx= (x_n)$ in $X$, she partitions its indices up into two parts, the {\em periodic part} $\Per(\bfx)$, and  the {\em aperiodic part} $\Aper(\bfx)$ (see Section \ref{Toeplitz-sequence}).  The periodic part is the set of indices that have been filled, with a constant symbol, using arithmetic progressions. The aperiodic part is what is left over. Thus, if $\bfx$ is actually a Toeplitz sequence, then $\Aper(\bfx)$ is empty, and the semicocycle is continuous. The existence of a large $\Aper(\bfx)$ is what makes the dynamics more complex, e.g., it can lead to the existence of several invariant measures, measurable but not continuous eigenvalues, positive entropy, and so on.
	
	Our main result is Theorem \ref{thm:main-result}. It tells us  that given an automatic sequence $\bfu$ which belongs to a Toeplitz shift, the reverse-reading automaton which generates $\bfu$ gives us complete information about the indices which belong to  $\Aper(\bfu)$, and hence the set of discontinuities of the semicocycle  defined by $\bfu$. To prove Theorem \ref{thm:main-result}, we first construct a suitable automaton that generates $\bfu$ in reverse reading, in Theorem \ref{thm:automaton}. This automaton is minimal, so it can also be obtained using Eilenberg's classic construction, but we label the states in a different manner using the {\em  semigroup} of the corresponding substitution (Definition \ref{def:structure-semigroup}). 
	
	Finally, consider a formal power series $f(x)$ whose coefficients form an automatic sequence. Reverse-reading automata are instrumental in expressing $f(x)$ as either a root of a polynomial, as given by Christol's theorem, or as a diagonal of a higher dimensional rational function, as given by Furstenberg's theorem; for an exposition of either result, see \cite{allouche-shallit-2003}. In future work, we will use the reverse-reading automaton that we have constructed to translate certain dynamical features of substitution shifts to algebraic properties of their annihilating polynomials.

	\section{Preliminaries}\label{sec:preliminaries}
	
    Let $\N$ and $\N_0$ denote the positive integers and the  non-negative integers respectively. Given $n\in \N$, there is a unique expression $n=\sum_{i=0}^{j-1} n_i\ell^i$, where each $n_i\in \{0,\dots , \ell -1\}$ and $n_{j-1}\neq 0$; we call this the {\em canonical  base-$\ell$ expansion of $n$} and we denote it as $(n)_\ell:= n_{j-1}\dots  n_0\cdot$. Note the use of the radix point ``$\cdot$", which indicates that $n_0$ is the least significant digit. We define the canonical  base-$\ell$ expansion of $0$ to be $(0)_\ell:=0$.
    Negative integers  also have a base-$\ell$ expansion, which we define as follows. If the natural number $n$ satisfies $(n)_\ell=  n_{j-1} \dots  n_0\cdot $, then let $w\in \{0,\dots , \ell-1\}^{ j}$ denote the unique word of length $j$ such that \[(n)_\ell+w\cdot = 1\overbrace{0 \dots 0  }^{j}\cdot \, ,\]
    where addition is performed mod $\ell$ and with carry   from right to left. With this, we define the canonical base-$\ell$ expansion of the negative integer  $-n$ as the sequence
    \[   (-n)_\ell:=      \dots (\ell -1) (\ell-1) w\cdot \, ,\]
  the reason being that in the ring of $\ell$-adic integers  $\Z_\ell $, $(n)_\ell+ (-n)_\ell $ equal the zero element $\dots 000\cdot$; this further explains the use of radix point.  See Section \ref{sec:odometer} for the construction of $\Z_\ell$.  Given a digit $k\in \{0,1,\dots , \ell -1\}$, let $\bar k$ denote the constant sequence $\dots kkk$.  With this notation, $(-n)_\ell= \overline{(\ell -1)}\,w\cdot$.

    For example, if $\ell=3$, and $n=25$, then $(25)_3=221\cdot$, $w=002$, and $(-25)_\ell:= \bar{2}002 \cdot $.

	Let $\mathcal A$ be a finite alphabet. In this article, the indexing set $\I $ equals  $\N_0$ or $\Z$. An element in $\mathcal A^\I$  is written $\bfu=\left(u_n\right)_{n \in \I}$.
	
Next we define some finite-state automata with which we will work. We gently modify existing definitions in \cite{allouche-shallit-2003}  to allow  the generation of two-sided sequences (see Definition \ref{def:automatic}).

\begin{definition}\label{DFA, DFAO}
A {\em deterministic finite automaton } (DFA)  is a $4$-tuple $\mathcal M=(\mathcal S, \Sigma, \delta, \mathcal S_0)$, where $\mathcal S$ is a finite set of {\em states}, $ \mathcal S_0\subseteq \mathcal S$ is a set of initial states, $\Sigma$ is a finite  {\em input} alphabet,  and $\delta:\mathcal S\times \Sigma \rightarrow \mathcal S$ is the {\em transition} function.

A {\em deterministic finite automaton with output} (DFAO)  is a $6$-tuple $\mathcal M=(\mathcal S, \Sigma, \delta, \mathcal S_0, \mathcal A, \Omega_0)$, where 
$(\mathcal S, \Sigma, \delta, \mathcal S_0)$ is a DFA,
$\mathcal A$ is a finite {\em output} alphabet, and  where for each initial state $s_0\in \mathcal S_0$, there is  an associated  {\em output function} $\omega_0\in \Omega_0$,
$\omega_0:\mathcal S\rightarrow \mathcal A$. 
\end{definition}

For example, the DFAO described in Figure \ref{pd_dir-pic} has $\mathcal S=\{a,b,c \}$,  $\Sigma=\{0,1,2,3,4,5,6,7,8\}$,  $\mathcal S_0= \{a,b\}$, $\delta:{\mathcal S}\times\Sigma\to{\mathcal S}$ as given in Table \ref{tab:deltatable}, the output alphabet ${\mathcal A}={\mathcal S}$, and the output functions $\omega_a$ and $\omega_b$ are the identity map.
\begin{table}[ht]
        \centering
        \begin{tabular}{|c||c|c|c|c|c|c|c|c|c|}
        \hline
             $\delta$ &$0$& $1$&$2$&$3$& $4$&$5$&$6$& $7$&$8$\\
             \hline\hline
             $a$& $a$ &$c$&$b$& $b$ &$b$&$a$& $b$ &$a$&$a$\\
             \hline
             $b$& $b$ &$a$&$a$& $a$ &$c$&$b$& $a$ &$c$&$b$\\
             \hline
             $c$& $b$ &$a$&$a$&$b$ &$a$&$a$& $a$ &$c$&$b$\\
             \hline
        \end{tabular}
        \caption{The map $\delta$ for Figure \ref{pd_dir-pic}.}
        \label{tab:deltatable}
\end{table}

The function $\delta$ extends in a natural way to the domain $\mathcal S \times \Sigma^*$, where $\Sigma^*$ is the set of all finite words on the alphabet $\Sigma$.
One way to do this is to  define $\delta(s, n_k \cdots n_1 n_0) := \delta(\delta(s, n_0), n_k \cdots n_1)$ recursively. 
Here, the
way we have written things, we first feed $n_0$, then $n_1$, etc, so that we start with the least significant digit, the automaton is then called a \emph{reverse-reading automaton}.
 However, we can also extend $\delta $ to words by starting
with   the most significant digit  $n_k$,  and ending with $n_0$. In other words we can also define $\delta(s, n_k \cdots n_1 n_0) := \delta(\delta(s, n_k), n_{k -1} \cdots n_0)$, then the automaton is a \emph{direct-reading automaton}.  
  In this article the automata that we investigate are reverse reading automata.

   If $n\in -\N$, recall that its  expansion $(n)_\ell = \overline{(\ell -1)}\,n_j\cdots n_0\cdot$ is an infinite sequence which must eventually equal $\overline{(\ell -1)}\cdot$.
  For negative integers, instead of working with the canonical base-$\ell$ expansion,   we shall see that we can set things up to work with  the finite  word $(\ell -1)\,n_{j}\cdots  n_0\cdot$.
  This is because we will feed the transition function $\delta$ with base-$\ell$ expansions of integers, and the function $\delta$  only accepts finite words from $\Sigma^*$. We will arrange  it so that a ``finite" expansion of a negative integer is sufficient for our needs, by ensuring that the appropriate  edge labelled $\ell -1$ is a loop.

  \begin{definition}\label{def:automatic}
A sequence $\left(u_n\right)_{n \geq 0}$  in $\mathcal A^{\N_0}$ is {\em $\ell$-automatic} if there is  a DFAO
$(\mathcal S, \{0, \dots,\ell-1\}, \delta, \{s_0\}, \mathcal A, \{\omega_0\})$ such that $u_n = \omega_0(\delta (s_0, n_k \cdots n_0))$ for $(n)_\ell= n_k\cdots n_0\cdot$.
Similarly, a sequence $\left(u_n\right)_{n \in \Z}$  in $\mathcal A^{\Z}$ is {\em $\ell$-automatic} if there is a DFAO 
$(\mathcal S, \{0, \dots, \ell - 1\}, \delta, \{s_0, s_1\}, \mathcal A, \{ \omega_0, \omega_1\})$ such that  $(\mathcal S, \{0, \dots, \ell - 1\}, \delta, \{s_0\}, \mathcal A, \{\omega_0\})$  generates $\left(u_n\right)_{n\geq 0}$ starting at $s_0$, and such that  if $n<0$, then
$u_n = \omega_1(\delta (s_1, (\ell -1)\,n_k \cdots n_0))$ for $(n)_\ell=  \overline{(\ell-1)}\,n_k\cdots n_0\cdot$.
\end{definition}

Note that in general a sequence generated by a DFAO  in reverse reading will generally not equal the sequence generated by the same DFAO  in direct reading. Also, an automatic sequence generated by a DFAO in direct reading can be generated by a, possibly different, DFAO in reverse reading, and vice versa. To transition from a direct-reading to a reading-reverse automaton one first reverses all edges in the given automaton, see for example \cite[Theorem 4.3.3]{allouche-shallit-2003}. The resulting automaton may be non-deterministic, i.e., there could be multiple edges labelled $i$ emanating from one state, and in this case the automaton needs to be determinised \cite[Theorem 4.1.3 ]{allouche-shallit-2003}.  However reversing the automaton reading  can blow up the state size of the automaton exponentially in $\ell$; see  for example the difference in state size in Figures \ref{pd_dir-pic} and \ref{fig:my_label} which are the direct and reverse reading automata that generate the same sequence in those examples.

Given a finite alphabet $\mathcal A$, let $\mathcal A^{*}$ denote the monoid of words on $\mathcal A$, and $ \mathcal{A}^+ \subset \mathcal A^{*}   $ denote the set of nonempty words on $\mathcal A$.
Let $\theta:\, \mathcal{A}^*\to \mathcal{A}^+$ be a morphism, also  called a \emph{substitution}. Note that  the image of any letter is a non-empty word. We will abuse the notation and write  $\theta:\, \mathcal{A} \to \mathcal{A}^+$.  
Let $\mathbb I= \mathbb N_0$  or $\mathbb Z$, according to whether  we are to construct a one-sided or two-sided sequence. Using concatenation, we extend $\theta$ to~$\mathcal{A}^\mathbb{I}$.
The finiteness of $\mathcal A$ guarantees that  $\theta$-periodic points, i.e., points $\bfx$ such that $\theta^k(\bfx)=\bfx$ for some  integer $k\geq 1$, exist. A substitution is  \emph{primitive} if there exists an integer $n\geq 1$ such that for all $a,b\in \mathcal A$, the letter $a$ occurs in $\theta^n(b)$.  A substitution $\theta$ is {\em aperiodic} if its $\theta$-periodic points are not shift-periodic. It is (constant) length-$\ell$ if $|\theta(a)|= \ell$ for each letter $a\in \mathcal A$.
In applications to dynamics we assume that $\theta$ is  primitive and aperiodic, but our automata results do not require this constraint. 

 The following theorem by Cobham \cite{Cobham} connects automatic sequences to one-sided, right-infinite substitution fixed sequences. A {\em coding} is a map $\omega:\mathcal A\rightarrow \mathcal B$; it extends to a map $\tau:\mathcal A^{\I}\rightarrow \mathcal B^{\I}$.

\begin{theorem}[Cobham's theorem]
A sequence $\bfu=\left(u_n\right)_{n\geq 0}$ is $\ell$-automatic in direct reading using  if and only if $\bfu$ is the image under a coding of
a right-infinite fixed point of a substitution of length $\ell$.
\end{theorem}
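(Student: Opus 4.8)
The plan is to exhibit, for each direction, an explicit dictionary between the states of the DFAO and the letters of a constant length-$\ell$ substitution, under which feeding the base-$\ell$ expansion of $n$ in direct reading corresponds to descending through the nested substitution blocks that contain position $n$.

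For the ``if'' direction, suppose $\bfu = \tau(\bfv)$, where $\bfv = (v_n)_{n \ge 0}$ is a fixed point of a constant length-$\ell$ substitution $\theta$ on an alphabet $\mathcal B$, with $\theta(b_0)$ beginning in $b_0 = v_0$, and $\tau : \mathcal B \to \mathcal A$ a coding. I would take $\mathcal S = \mathcal B$, $s_0 = b_0$, output function $\omega = \tau$, and define the transition function by letting $\delta(b,i)$ be the letter in position $i$ of $\theta(b)$ for $0 \le i \le \ell - 1$. Since $\theta(b_0)$ begins in $b_0$ we obtain $\delta(s_0,0) = s_0$ automatically. For the ``only if'' direction, run the construction in reverse: given the DFAO, set $\theta(s) := \delta(s,0)\,\delta(s,1)\cdots\delta(s,\ell - 1)$ on the state alphabet $\mathcal S$. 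This is a substitution of constant length $\ell$, and the hypothesis $\delta(s_0,0) = s_0$ says exactly that $\theta(s_0)$ begins in $s_0$, so $\theta$ admits a right-infinite fixed point $\bfv = \lim_k \theta^k(s_0)$; the coding is again $\omega$.

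In both cases the crux is a single lemma, proved by induction on the number of digits of $n$: the letter $v_n$ of the fixed point equals $\delta(s_0, (n)_\ell)$, where $(n)_\ell$ is fed in direct reading. For the base case $0 \le n < \ell$, the first block $\theta(s_0) = \delta(s_0,0)\cdots\delta(s_0,\ell-1)$ gives $v_n = \delta(s_0,n)$. For the inductive step I would write $n = q\ell + r$ with $0 \le r < \ell$ and use the fact that, in a fixed point of a length-$\ell$ substitution, the block of length $\ell$ occupying positions $q\ell, \ldots, q\ell + \ell - 1$ is exactly $\theta(v_q)$; hence $v_n = \delta(v_q, r)$. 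Since the direct-reading expansion of $n$ is the expansion of $q$ with the digit $r$ appended at the least significant end, the recursive definition of $\delta$ gives $\delta(s_0, (n)_\ell) = \delta(\delta(s_0, (q)_\ell), r) = \delta(v_q, r)$, which matches by the inductive hypothesis. Applying the coding then yields $u_n = \omega(v_n) = \omega(\delta(s_0,(n)_\ell))$, which is automaticity in direct reading.

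The step I expect to require the most care is reconciling the bookkeeping of leading zeros with the ``drop leading zeros'' convention for $(n)_\ell$: the inductive argument tacitly feeds expansions that may carry leading zeros (for instance when $q = 0$), and this is harmless precisely because $\delta(s_0,0) = s_0$ forces $\delta(s_0, 0^j w) = \delta(s_0, w)$ for every word $w$. Making this consistency explicit, together with fixing the $0$-indexing convention so that digit $i$ selects position $i$ of $\theta(\cdot)$, is what ties the two reading directions together and guarantees that the fixed point is genuinely right-infinite.
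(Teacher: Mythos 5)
Your proof is correct and is essentially the same argument the paper relies on: the paper cites Cobham for this theorem rather than proving it, but the construction it sketches afterwards---states labelled by letters, initial state the first letter of the fixed point, transitions $\delta(a,i):=\theta_i(a)$, output the coding---is exactly your dictionary, and your key induction ($v_n=\delta(s_0,(n)_\ell)$ in direct reading, via $n=q\ell+r$ and the block identity $v_{q\ell+r}=\theta_r(v_q)$) is precisely the content of the paper's Lemma~\ref{lem:projection-0}. Your attention to the leading-zeros convention via $\delta(s_0,0)=s_0$ matches the paper's reason for insisting on this hypothesis (and on simplified form), so no gap remains.
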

Furthermore, Cobham's theorem gives us a constructive way of moving between automatic sequences and  codings of substitutional fixed points.

Let $\theta$ be a length-$\ell$ substitution. We write 
$\theta (a)= \theta_0(a) \cdots \theta_{\ell-1}(a)$; 
with this notation we 
see that for each  $0\leq i \leq \ell-1$, we have a map
$\theta_i:\mathcal A \rightarrow \mathcal A$ where, for each $a\in\mathcal A$, $\theta_i(a)$ is the 
$i$-th letter of the word $\theta(a)$. 
The map $\theta_i$ can be visualised as the $i$-th column of letters if we stack the substitution words in an $|\mathcal A| \times \ell$-array. 

We will assume that our substitution is in {\em simplified form}, i.e. $\theta_0$ and $\theta_{\ell -1}$ are each idempotents in $\mathcal A^{\mathcal A}$, i.e, $\theta_0^2=\theta_0$ and $\theta_{\ell -1}^2=\theta_{\ell -1}$.
The benefit of working with idempotent first and last columns is as follows. Given a letter $a\in \mathcal A$, the sequence of words $(\theta^n(a))_{n\geq 0}$ converges to a right-infinite $\theta$-periodic sequence, i.e.,  a sequence ${\bfu}$  such that $\theta^k({\bfu})={\bfu}$. However it can happen that for small values of $n$, the word $\theta^{(n+1)k}(a)$ is not an extension of $\theta^{nk}(a)$, i.e., we have to wait a little until convergence kicks in.  The assumption that $\theta_0$ is an idempotent guarantees both that $\theta^{(n+1)k}(a)$ extends $\theta^{nk}(a)$ for each $n\geq 1$, and also that $k=1$, so that we only need concern ourselves with $\theta$-fixed points.
 Similarly, the assumption that $\theta_{\ell -1}$ is an idempotent guarantees fast convergence to 
  left-infinite $\theta$-fixed points.  The assumption that a substitution is simplified makes for simpler algebraic descriptions of a substitution, eg, see \cite{Kellendonk-Yassawi}. The  fact that $\mathcal A$ is finite and the pigeonhole principle imply that there always exists a power of $\theta$ which is in simplified form. Finally, if $\theta_0$ and  $\theta_{\ell -1}$ are idempotents, 
  then we will be able to  reduce the  base-$\ell$ expansion  $\overline{(\ell -1)} w\cdot$ of a negative integer $n$ to the finite  expansion $(\ell -1) w \cdot$, and merely feed this finite expansion in to 
  the automaton to generate $u_n$.  Similarly if $n$ is a natural number,  the same entry $u_n$ is obtained if we pad $(n)_\ell$ with leading zeros. Also, we can then lightly strengthen the statement of Cobham's theorem, so that it concerns bi-infinite fixed points of substitutions; this is desired as we work with two-sided and hence invertible shifts. From a dynamical perspective, as we work with primitive aperiodic substitutions, the dynamical system generated by $\theta$ equals that generated by any of its powers, so there is no loss of generality in assuming that $\theta$ is in simplified form.

We can  extend the definition of $\theta_i$ for $0\leq i \leq \ell-1$ for a simplified substitution to define $\theta_n$ where $n\in \Z$ as follows.  If $n\in \N$ and $(n)_\ell= n_{j}\cdots n_0\cdot$, we define 
$\theta_n:= \theta_{n_0} \circ \dots \circ \theta_{n_j}$. 
 Similarly, if $n\in -\N$  and  $(n)_\ell= \overline{(\ell -1)}\,n_{j}\cdots n_0 \cdot$, we define 
$\theta_n:= \theta_{n_0} \circ \cdots \circ \theta_{n_j} \circ \theta_{\ell -1}$.
We can also extend the notion of a column map applied to a sequence denoted by  a bold \label{emboldened_mu} 
 $\boldsymbol{\theta}_n:\mathcal{A}^{\mathbb{I}}\to\mathcal{A}^{\mathbb{I}}$ by setting  
\begin{equation}\label{emb_mu}
\boldsymbol{\theta}_n\left((u_k\right)_{k\in \mathbb{I}})=\left(\theta_n(u_k)\right)_{k\in \mathbb{I}} .\end{equation}

Given a substitution in simplified form, there is a very natural direct reading DFA that will generate a one-sided fixed point $\bfu$, given  by the proof of Cobham's theorem \cite{Cobham}.
Its initial state is the first letter $u_0$ of $\bfu$, its states are labelled by letters in $\mathcal A$, and its transition function is given by the maps $\theta_i$, as $\delta(a,i):= \theta_i(a)$. Moreover, although the classical literature concerning automata deals with one-sided sequences, it is straightforward to modify the statements to generate two-sided fixed points. Namely, to generate the entry $u_{-n}$ of the  left-infinite fixed point $\dots u_{-2}u_{-1}$, if $(n)_\ell= \overline{(\ell -1)}\,n_{j}\cdots  n_0\cdot$, we input the finite word $(\ell - 1)\,n_{j}\cdots  n_0$ in direct reading,
 but starting at the initial state $u_{-1}$. We summarise some of the information that we will need in the following lemma; its proof can be found in \cite{Cobham} for the case when $n\in \N_0$, and a simple generalisation works for $n\in -\N$. If ${\bfu}$ is a two-sided fixed point, we will call $u_{-1} \cdot u_0$ its {\em seed}.

 \begin{lemma}\label{lem:projection-0} Let $\theta$ be a length-$\ell$ substitution, in simplified form, with fixed point given by the seed $b\cdot a$. If $n\in \N_0$ with $(n)_{\ell} = n_k\cdots n_0\cdot$, then
 $u_n = \theta_{n_0} \circ\cdots  \circ\theta_{n_k} (a)$. If  $n\in -\N$ with  $(n)_{\ell} = (\overline {\ell -1 })\,n_k\cdots n_0\cdot$, then $u_n= \theta_{n_0} \circ\cdots  \circ\theta_{n_k} \circ \theta_{\ell -1} (b)$.
  \end{lemma}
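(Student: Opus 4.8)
The plan is to reduce the entire statement to one recursion and then iterate it, with the rest being bookkeeping on base-$\ell$ expansions. The key observation is that, since $\bfu$ is a fixed point we have $\bfu=\theta(\bfu)$, and because $\theta$ has constant length $\ell$, the image block $\theta(u_m)$ occupies exactly the positions $\ell m,\ell m+1,\dots,\ell m+\ell-1$ of $\bfu$, for every $m\in\Z$; this holds for negative $m$ as well, with $\theta(u_{-1})$ sitting in positions $-\ell,\dots,-1$ and the blocks remaining contiguous as $m$ decreases. Reading off the $i$-th letter of this block, and recalling that $\theta_i(u_m)$ is by definition the $i$-th letter of $\theta(u_m)$, I obtain the fundamental recursion
\[
u_{\ell m+i}=\theta_i(u_m),\qquad 0\le i\le \ell-1,\quad m\in\Z.
\]

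For $n\in\N_0$ with $(n)_\ell=n_k\cdots n_0$, I use that $n=\ell\lfloor n/\ell\rfloor+n_0$ and that $\lfloor n/\ell\rfloor$ has expansion $n_k\cdots n_1$. Applying the recursion with $i=n_0$ peels off the least significant digit: $u_n=\theta_{n_0}(u_{\lfloor n/\ell\rfloor})$. Iterating $k$ times (or by a short induction on $k$) strips off $n_0,n_1,\dots,n_k$ in turn and bottoms out at $u_0=a$, giving
\[
u_n=\theta_{n_0}\circ\theta_{n_1}\circ\cdots\circ\theta_{n_k}(a).
\]
The composition order is exactly as claimed because each peeling step wraps a new $\theta_{n_j}$ on the outside, so the least significant digit yields the outermost map.

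The negative case runs along identical lines, the only differences being the terminal letter and an accounting of the canonical expansion. For $n\in-\N$ the floor-division reduction $n=\ell\lfloor n/\ell\rfloor+n_0$ is consistent with dropping the least significant digit of the ultimately-$\overline{\ell-1}$ expansion, and the process now terminates not at $0$ but at $-1$, whose expansion is the pure tail $\overline{\ell-1}$. Iterating the recursion over $n_0,\dots,n_k$ therefore yields $u_n=\theta_{n_0}\circ\cdots\circ\theta_{n_k}(u_{-1})=\theta_{n_0}\circ\cdots\circ\theta_{n_k}(b)$. The canonical finite word $\ell-1\,n_k\cdots n_0$ retains one leading $\ell-1$, contributing an extra $\theta_{\ell-1}$; this is harmless precisely because the seed condition forces $\theta_{\ell-1}(b)=b$ (equivalently $u_{-1}=\theta_{\ell-1}(u_{-1})$), so that $\theta_{n_0}\circ\cdots\circ\theta_{n_k}\circ\theta_{\ell-1}(b)=\theta_{n_0}\circ\cdots\circ\theta_{n_k}(b)=u_n$, matching the stated formula.

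The routine part is the digit peeling; the main thing to get right is the two-sided indexing for negative $m$---verifying that $\theta(u_m)$ really occupies positions $\ell m,\dots,\ell m+\ell-1$ for $m<0$, that the floor-division digits coincide with the canonical ultimately-$\overline{\ell-1}$ expansion, and that the reduction bottoms out at the seed letter $u_{-1}=b$ while the extra leading $\ell-1$ is absorbed by the idempotent $\theta_{\ell-1}$. Once the recursion is established uniformly over $\Z$, both formulas follow by the same iteration.
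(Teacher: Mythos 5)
Your proof is correct and follows essentially the same route the paper relies on: the paper gives no proof of its own, citing Cobham for $n\in\N_0$ and remarking that ``a simple generalisation works for $n\in-\N$,'' and Cobham's argument is exactly your digit-peeling recursion $u_{\ell m+i}=\theta_i(u_m)$ iterated down to the seed. Your handling of the negative case---checking the block alignment $\theta(u_m)$ at positions $\ell m,\dots,\ell m+\ell-1$ for $m<0$, bottoming out at $u_{-1}=b$, and absorbing the canonical expansion's leading digit via $\theta_{\ell-1}(b)=b$ (itself forced by the recursion at $m=-1$, $i=\ell-1$)---is precisely the ``simple generalisation'' the paper alludes to.
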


\begin{example}\label{bigdiag_drex}

Define $\theta: \{a,b,c\}\rightarrow \{a,b,c\}^{3}$ as  $a\mapsto acb$, $b\mapsto baa$, and $c\mapsto bba$. While $\theta_0$ is an idempotent, the substitution  $\theta$ is not in  simplified form as $\theta_2$ is not an idempotent. However $\theta^2$ is in simplified form. The substitution $\theta^2$ is:\\\[a\mapsto acbbbabaa,\,b\mapsto baaacbacb,\,c\mapsto baabaaacb\] A minimal direct-reading automaton of $\theta^2$  that generates the fixed point given by the seed $b\cdot a$ is given in Figure \ref{pd_dir-pic}. We feed non-negative integers to the initial state $a$, and negative integers to the initial state $b$.

\end{example}

\begin{figure}[htbp]
\centering
\begin{tikzpicture}
\tikzset{->,>=stealth',shorten >=1pt,node distance=2.5cm,every state/.style={thick, fill=white}}
\node[state,initial,initial text=$\N_0$] (q0) {$a$};
\node[state, initial, above right of=q0,initial text=$-\N$] (q1) {$b$};
\node[state, below right of=q1] (q2) {$c$};
\draw 
(q0) edge[loop below] node{0,5,7,8} (q0)
(q0) edge[below] node{1} (q2)
(q0) edge[left] node{2,3,4,6} (q1)
(q1) edge[loop above] node{0,5,8} (q1)
(q1) edge[bend left, right] node{1,2,3,6} (q0)
(q1) edge[bend left, in=100, out=80, right] node{4,7} (q2)
(q2) edge[right] node{0,3,8} (q1)
(q2) edge[bend left, below] node{1,2,4,5,6} (q0)
(q2) edge[loop right] node{7} (q2)
;
\end{tikzpicture}
\caption{A minimal direct-reading automaton for $\theta^2$ in Example \ref{bigdiag_drex}}
\label{pd_dir-pic}
\end{figure}
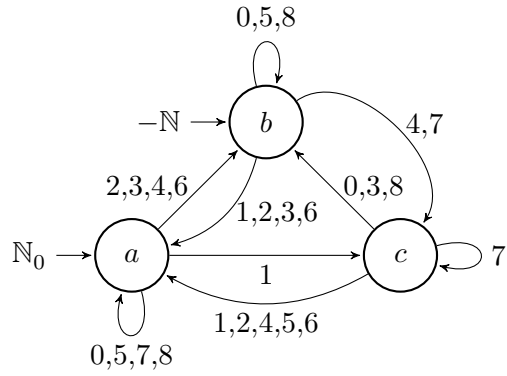

\subsection{The $\ell$-kernel and the reverse-reading automaton }
The \emph{$\ell$-kernel} of a sequence $\left(u_{n}\right)_{n\geq 0}$ is the collection of sequences
\[
\ker_\ell\bigl(\left(u_n\right)_{n \geq 0}\bigr) =
\bigl\{\bigl(u_{n\ell^{e}+j}\bigr)_{n\geq 0} \, : \, e\geq 0, \, 0\leq j\leq \ell^{e}-1\bigr\}.
\]

The following theorem is due to Eilenberg \cite{allouche-shallit-2003}. The {\em size} of an automaton is the cardinality of its state set.
\begin{theorem}[Eilenberg's theorem]\label{Eilenberg}
A sequence $\bfu$ is $\ell$-automatic in reverse reading if and only if
it has a finite $\ell$-kernel. Furthermore, the size  of a minimal finite-state automaton which generates $\bfu$ is the cardinality of $\ker_\ell(\bfu)$.
\end{theorem}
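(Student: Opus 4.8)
The plan is to identify the states of a reverse-reading automaton with the elements of the $\ell$-kernel, so that both assertions — the finiteness equivalence and the count of states of a minimal automaton — fall out of a single correspondence. The key bookkeeping observation is that reverse reading processes the least significant digits first: if, starting from the initial state, we feed the $e$ lowest digits of $n$, then we have committed to the residue $j = n \bmod \ell^e$, and continuing to read the remaining digits $m = \lfloor n/\ell^e\rfloor$ produces exactly the subsequence $(u_{\ell^e m + j})_{m\geq 0}$, which is the generic element of $\ker_\ell(\bfu)$. I would formalise this by attaching to each state $s$ of a reverse-reading automaton $(\mathcal S,\{0,\dots,\ell-1\},\delta,\{s_0\},\mathcal A,\omega)$ the sequence $\mathrm{seq}(s) := (\omega(\delta(s, \mathrm{rev}(m))))_{m\geq 0}$ generated by running the automaton from $s$, where $\mathrm{rev}(m)$ denotes the base-$\ell$ expansion of $m$ fed least-significant-digit first.

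For the forward direction, suppose $\bfu$ is generated in reverse reading by such an automaton. If $s = \delta(s_0, w)$ is a reachable state, where $w$ is the reverse expansion of some $j < \ell^e$ padded to length $e$, then concatenating inputs gives $\delta(s, \mathrm{rev}(m)) = \delta(s_0, \mathrm{rev}(\ell^e m + j))$, whence $\mathrm{seq}(s) = (u_{\ell^e m + j})_{m\geq 0} \in \ker_\ell(\bfu)$. Conversely every kernel element arises this way, so $s\mapsto \mathrm{seq}(s)$ maps the reachable states onto $\ker_\ell(\bfu)$; in particular $\ker_\ell(\bfu)$ is finite and has at most as many elements as the automaton has reachable states.

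For the converse direction I would construct the automaton whose state set is $\ker_\ell(\bfu)$ itself. Take the initial state to be $\bfu$ (the element with $e=0$, $j=0$), the output $\omega(\bfv) := v_0$, and the transition $\delta(\bfv, i) := (v_{\ell m + i})_{m\geq 0}$ for each digit $i$. The crucial point is that this transition is defined purely in terms of the sequence $\bfv$, hence is manifestly independent of any representation $\bfv = (u_{\ell^e m + j})_m$; substituting such a representation shows $\delta(\bfv,i) = (u_{\ell^{e+1}m + (i\ell^e + j)})_m \in \ker_\ell(\bfu)$, so the transition is well-defined on the kernel. A short induction on the number of digits read confirms that after feeding the $e$ lowest digits of $n$ we sit at the element $(u_{\ell^e m + (n \bmod \ell^e)})_m$, so after reading all of $\mathrm{rev}(n)$ the output is $u_n$; thus this finite automaton generates $\bfu$ in reverse reading.

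Finally, for the minimality count I would combine the two halves. The forward estimate already shows that every reverse-reading automaton generating $\bfu$ has at least $|\ker_\ell(\bfu)|$ reachable states. Applying $\mathrm{seq}(\cdot)$ to the kernel automaton just constructed and unwinding the same induction gives $\mathrm{seq}(\bfv) = \bfv$, so there the map is the identity and the automaton attains the bound exactly; hence its $|\ker_\ell(\bfu)|$ states are all necessary and it is minimal. I expect the main obstacle to be precisely this minimality step: one must check that $\mathrm{seq}(\cdot)$ is genuinely well-defined and surjective — which forces care about leading high-order zeros, i.e.\ that reading an extra $0$ fixes the output (true here because $\delta(\bfv,0)=(v_{\ell m})_m$ preserves $v_0$ and leaves the residue unchanged) — and that distinct kernel elements really are separated by some future input, so that no two of them can be merged. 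This is a Myhill--Nerode type argument in disguise, with the right-congruence classes realised concretely as the kernel sequences.
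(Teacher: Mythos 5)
Your proof is correct, and it is essentially the approach the paper itself relies on: the paper does not prove Eilenberg's theorem (it cites \cite{allouche-shallit-2003}), but the construction it recalls just below the statement --- states labelled by elements of $\ker_\ell(\bfu)$, output $\pi_0$, transitions $\delta(\bfv,i):=\Lambda_i(\bfv)$ --- is exactly your converse-direction automaton, and your forward direction plus the counting argument is the standard Myhill--Nerode-style completion. You also correctly flag the one genuine subtlety (insensitivity to high-order zeros, needed for $\mathrm{seq}$ to be well defined on states reached by padded expansions), which under the paper's convention that automata give the same output regardless of leading zeros is exactly what makes the lower bound and hence the minimality count go through.
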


As with Cobham's theorem, Eilenberg's theorem is stated for one-sided sequences. However we can equally define the $\ell$-kernel of a two-sided sequence, and give a  statement and proof of Eilenberg's theorem that works for  two-sided sequences.

Let $\I=\N_0$ or $\I=\Z$. To study the $\ell$-kernel of a sequence $\bfu$, we introduce the operators $\Lambda_i: \mathcal A^{\I}\rightarrow \mathcal A^{\I}$, for $0 \leq i \leq \ell -1$ defined by
\[ \Lambda_i(\left(u_n\right)_{n\in \I}):= \left(u_{\ell n +i}\right)_{n\in \I}.\] 

In the case where the sequence $\bfu$ takes values in a finite field and $\ell=p^j$, with $p$ prime, and when $\I=\N$,  the operators $\Lambda_i$ are called the {\em Cartier operaters}, except that they act on formal power series instead of sequences.

The usual construction of a reverse-reading automaton that generates a one-sided automatic sequence, e.g., the one described in \cite{allouche-shallit-2003}, has states labelled with elements of $\ker_\ell\bigl(\bfu\bigr)$, and output obtained by projecting the 0-indexed entry,  $\pi_0 ((u_n)):=u_0$.   For $\bfv$, a sequence labelling one of the states, the transition function $\delta$ is $\delta(\bfv, i):= \Lambda_i(\bfv)$. We will give an alternate construction of the  reverse-reading automaton which will be useful for our application to Toeplitz sequences. To do this,
we link the operators $\Lambda_i$ to the column maps ${\bf \theta}_i$. Recall the definition of $\boldsymbol{\theta}_r$ in \eqref{emb_mu}.

\begin{proposition}\label{prop:column=cartier} Let $\bfu=\left(u_n\right)_{n\in \I}$ be a fixed point of a length-$\ell$ substitution $\theta$. Then for each $0\leq r\leq\ell-1$, we have
  \begin{equation}\label{lambda=mu eqn}
\Lambda_r(\bfu)=\boldsymbol{\theta}_r(\bfu)     . \end{equation}
\end{proposition}
\begin{proof}
Given a fixed point $\bfu$,  note that by definition, for each $k\in \I$, we have $u_{k\ell } \cdots u_{k\ell +\ell -1}= \theta(u_k)$.   We have $\theta(\bfu)=\bfu$ by definition. Therefore $ u_{k\ell +r}= \theta_r(u_k)$, so that   $\boldsymbol{\theta}_b  (\bfu)= ( \theta_r(u_k))_{k\in \I}= \left( u_{k\ell +r}\right)_{k\in \I} $. On the other hand, 
\[ \Lambda_r(\bfu)= \Lambda_r(\left(u_k\right)_{k\in \I}) =  \left(u_{k\ell+r}\right)_{k\in \I}, \]
from which the result follows.
\end{proof}

As with the definition of the column maps $\theta_n$, we can extend the defintion of $\Lambda_i$ for $0\leq i\leq \ell-1$ to the definition of $\Lambda_n$, where $n\in \N$ as follows: if $(n)_\ell = n_j\cdots n_0\cdot$, with $n_j \neq 0$, then 
$\Lambda_n:= \Lambda_{n_j} \circ \dots \circ \Lambda_{n_0}$. Similarly, if $n\in -\N$ and $(n)_\ell = \overline{(\ell -1)}n_j\cdots n_0\cdot$ with $n_j \neq \ell -1$, then 
$\Lambda_n:= \Lambda_{\ell -1}\circ \Lambda_{n_j} \circ \dots \circ \Lambda_{n_0}$.
We have the following as a corollary of Proposition \ref{prop:column=cartier}.

\begin{corollary}\label{cor:column=cartier}
Let $\bfu=\left(u_n\right)_{n\in \I}$ be a fixed point of a length-$\ell$ substitution $\theta$. Then for 
 any $n_0,n_1,\dots,n_d$ with $0\leq n_i\leq \ell-1$ for each $i$, we have
\begin{equation}\label{lam_mu_eqn}
\Lambda_{n_d}\circ\Lambda_{n_{d-1}}\circ\dots\circ\Lambda_{n_0}(\bfu)=
\boldsymbol{\theta}_{n_0}\circ\boldsymbol{\theta}_{n_1}
\circ\dots\circ\boldsymbol{\theta}_{n_d}(\bfu)
\end{equation}
\end{corollary}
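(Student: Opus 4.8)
The plan is to prove the identity by induction on $d$, the base case $d=0$ being exactly Lemma \ref{lem:column=cartier}. The tempting shortcut---to invoke Lemma \ref{lem:column=cartier} once for each index $n_i$ in turn---does not work directly, and reckoning with this is the main point. The lemma gives $\Lambda_r(\bfu)=\boldsymbol{\theta_r}(\bfu)$ only because $\bfu$ is a \emph{fixed point}; but the sequence $\Lambda_r(\bfu)=\boldsymbol{\theta_r}(\bfu)$ is in general not itself a fixed point of $\theta$, so it cannot be fed back into the lemma. The difficulty is therefore to arrange the induction so that the lemma is always applied to $\bfu$ itself.

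To achieve this I would first record an auxiliary commutation relation valid for \emph{every} sequence $\bfv=(v_n)_{n\in\I}$, not only fixed points, and for all $0\le r,s\le \ell-1$:
\begin{equation*}
\Lambda_s\circ\boldsymbol{\theta_r}=\boldsymbol{\theta_r}\circ\Lambda_s.
\end{equation*}
This is immediate from the definitions: $\boldsymbol{\theta_r}$ acts letterwise while $\Lambda_s$ only reindexes, so both sides send $\bfv$ to $\left(\theta_r(v_{\ell n+s})\right)_{n\in\I}$. The content is that the decimation operators may be pushed past the letterwise column operators without interacting.

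With this in hand the induction closes cleanly. Assuming the claim for $d-1$, namely $\Lambda_{n_{d-1}}\circ\dots\circ\Lambda_{n_0}(\bfu)=\boldsymbol{\theta_{n_0}}\circ\dots\circ\boldsymbol{\theta_{n_{d-1}}}(\bfu)$, I apply $\Lambda_{n_d}$ to both sides and then use the commutation relation $d$ times to move $\Lambda_{n_d}$ rightward past each $\boldsymbol{\theta_{n_i}}$, obtaining
\begin{equation*}
\Lambda_{n_d}\circ\dots\circ\Lambda_{n_0}(\bfu)=\boldsymbol{\theta_{n_0}}\circ\dots\circ\boldsymbol{\theta_{n_{d-1}}}\bigl(\Lambda_{n_d}(\bfu)\bigr).
\end{equation*}
Now $\Lambda_{n_d}$ is applied directly to the fixed point $\bfu$, so Lemma \ref{lem:column=cartier} applies and gives $\Lambda_{n_d}(\bfu)=\boldsymbol{\theta_{n_d}}(\bfu)$; substituting this produces the desired right-hand side $\boldsymbol{\theta_{n_0}}\circ\dots\circ\boldsymbol{\theta_{n_d}}(\bfu)$ and completes the step.

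As a cross-check, and as an alternative route that avoids the commutation lemma, I would evaluate both sides at index $n$. The single-step fixed-point identity $u_{\ell k+r}=\theta_r(u_k)$, valid at every position $k$ (this is what the proof of Lemma \ref{lem:column=cartier} actually establishes), iterates by induction to give both $\bigl(\Lambda_{n_d}\circ\dots\circ\Lambda_{n_0}(\bfu)\bigr)_n=u_{\ell^{d+1}n+m}$ and $\theta_{n_0}\circ\dots\circ\theta_{n_d}(u_n)=u_{\ell^{d+1}n+m}$, where $m=\sum_{i=0}^{d}n_i\ell^{i}$. I expect the only place needing care is the bookkeeping of the reversed composition order---the $\Lambda$'s have innermost index $n_0$ while the $\boldsymbol{\theta}$'s have innermost index $n_d$---which is precisely reconciled by the $n_i$ appearing in standard base-$\ell$ order inside $m$.
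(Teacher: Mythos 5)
Your proof is correct, but your primary argument takes a genuinely different route from the paper's. The paper proves the corollary by pure index bookkeeping: it shows by induction on $d$ that both sides, evaluated at position $n$, equal $u_{n\cdot\ell^{d+1}+n_d\cdot\ell^d+\cdots+n_1\cdot\ell+n_0}$, using at the inductive step only the reindexing definition of $\Lambda_{n_d}$ applied to the already-computed sequence; your final ``cross-check'' paragraph is in substance exactly this argument, with the same exponent $m=\sum_{i=0}^{d}n_i\ell^{i}$. Your main route instead isolates a structural fact the paper leaves implicit: the commutation $\Lambda_s\circ\boldsymbol{\theta_r}=\boldsymbol{\theta_r}\circ\Lambda_s$ on all of $\mathcal A^{\I}$, which holds because $\boldsymbol{\theta_r}$ acts letterwise while $\Lambda_s$ only reindexes (both sides send $\bfv$ to $(\theta_r(v_{\ell n+s}))_{n\in\I}$, as you verify). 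With this in hand the induction closes by applying Lemma \ref{lem:column=cartier} exactly once per step, always to the fixed point $\bfu$ itself, which neatly addresses your correct opening observation: since $\Lambda_r(\bfu)$ is in general not itself a fixed point of $\theta$, the lemma cannot naively be iterated, and it is precisely the commutation that explains the reversal of composition order in \eqref{lam_mu_eqn} without any index computation. As for what each approach buys: the paper's calculation is self-contained and produces explicitly the decimation formula identifying each kernel element as $\left(u_{\ell^{d+1}n+m}\right)_{n\in\I}$, which is what gets reused downstream (in Lemma \ref{lem:projection} and in the proof of Theorem \ref{thm:automaton}); your commutation lemma is more conceptual, valid for arbitrary sequences rather than fixed points, and makes the order reversal transparent rather than an artefact of bookkeeping. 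Both are complete proofs.
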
\begin{proof}
 Given $d\in \N$ and $0\leq n_0,n_1,\dots,n_d\leq \ell -1$, 
it can be shown as in the proof of Proposition \ref{prop:column=cartier} by induction that
\[ ({\boldsymbol \theta}_{n_0}\circ     {\boldsymbol \theta}_{n_{1}}\circ\dots\circ{\boldsymbol\theta}_{n_d}(\bfu))_n
=       u_{n \ell^{d+1} +n_d\ell^d +\dots+n_1\ell +n_0}\]
for each $n\in \I$.
We shall show that 
\begin{equation}\label{eq:ind-hyp}
\left(\Lambda_{n_d}\circ\dots\circ\Lambda_{n_1}\circ\Lambda_{n_0}( {u_n})\right)_{n\in \I}=\left(    u_{n\ell^{d+1} + n_d\ell^d +\dots+n_1\ell +n_0}        \right)_{n\in \I}\end{equation}
which will prove the corollary.

If $d=0$, the assertion follows by Proposition \ref{prop:column=cartier}. Inductively, assume that  \[\left(\Lambda_{n_{d-1}}\circ\dots\circ\Lambda_{n_1}\circ\Lambda_{n_0}( {u_n})\right)_{n\in \I}=\left(    u_{ n\ell^{d} +n_{d-1}\ell^{d-1} +\dots+n_1\ell +n_0}        \right)_{n\in \I}\]
holds for $d\in \N$.
   Then 
\begin{align*}
(\Lambda_{n_{d}}\circ\dots\circ\Lambda_{n_1}\circ\Lambda_{n_0}(
 {u_n}))_{n\in \I}
&=(\Lambda_{n_{d}}(\Lambda_{n_{d-1}}\circ\dots\circ\Lambda_{n_1}\circ\Lambda_{n_0}   
 ( {u_n})))_{n\in \I}
            \\
&=\Lambda_{n_{d}} \left(    u_{ n\ell^{d} +n_{d-1}\ell^{d-1} +\dots+n_1\ell +n_0}        \right)_{n\in \I}
\\
&=\left(u_{ n {\ell^{d+1}}  +  n_{d}\ell^{d}+ {n_{d-1}}{\ell^{d-1}}+\dots+{n_1}\ell +{n_0}}\right)_{n\in \I}
\end{align*}
and the induction is complete.
\end{proof}

Let  $\id: \mathcal A\rightarrow \mathcal A$ denote the identity map.

\begin{definition}
\label{def:structure-semigroup}
Let $\theta$ be a  length-$\ell$ substitution on $\mathcal A$ in simplified form. The {\em semigroup of $\theta$}, denoted $\mathcal S_\theta$ is the semigroup in $\mathcal A^\mathcal A$ defined by 
$$\mathcal S_\theta :=  \langle \id, {\theta}_i:0\leq i\leq \ell-1\rangle. $$

\end{definition}

We write elements  $s\in \mathcal A^\mathcal A$ as vectors, i.e., if  $\mathcal A:=\{ a_0, \dots , a_d\}$, then we write  $s= \begin{pmatrix} b_0, \dots ,b_d\end{pmatrix}^{T} $ for the function $s(a_i)= b_i$ for each $i$.

\begin{example}\label{pd intersection example}
Define $\theta: \{a,b\}\rightarrow \{a,b\}^{2}$ as  $a\mapsto ab$ and $b\mapsto aa$. This is the {\em period-doubling} substitution, and its simplified form is $\theta^2$; so $a\mapsto abaa$ and $b\mapsto abab$. We have 
 \[\mathcal S_{\theta^2}=\langle \id, {\theta^2}_i:0\leq i\leq 3\rangle = \{ \id,(a,a)^T,(b,b)^T\}.\]
 Note the importance of passing to a simplified form, otherwise the semigroup can be different, as it is here:
\[\mathcal S_{\theta^2} \subsetneq \mathcal S_\theta=  \langle \id, {\theta}_i:0\leq i\leq 1\rangle=\{ \id,(a,a)^T,(b,b)^T,(b,a)^T\}.\]
  \end{example}

A length-$\ell$ substitution  $\theta$ is {\em bijective} if every $\theta_j$ is a bijection, see \cite{Queffelec} for their study. If $\theta$ is bijective, then  $\mathcal S_\theta$ is a subgroup. Its importance is recognized in  \cite{Lemanczyk-Mentzen}, and it is related to the Ellis semigroup in \cite{Kellendonk-Yassawi}. Note that for non-bijective substitutions, the semigroup generated by the maps $\{ \theta_0, \dots , \theta_{\ell -1}\}$ does not necessarily contain $\id$.

The proof of the following lemma is a straightforward induction argument.
\begin{lemma}\label{lem:projection}
 If $\pi_0:\mathcal A^{\I}\rightarrow \mathcal A$ is the map which projects a sequence $(u_n)$ to its zero-indexed entry $u_0$, and if $n\in \N_0$ with $(n)_{\ell} = n_k \cdots n_0\cdot $,
then $u_n= \pi_0\left( \Lambda_{n_k}\circ\cdots\circ \Lambda_{n_0}({\bfu})\right)$. Similarly, if $\pi_{-1}:\mathcal A^{\Z}\rightarrow \mathcal A$ is defined as $\pi_{-1}((u_n)_{n\in Z}):= u_{-1}$, and $n\in -\N$ with  $(n)_{\ell} = \overline {(\ell -1) }\,n_k \cdots n_0\cdot$, then $u_n= \pi_{-1}\left( \Lambda_{\ell -1}\circ\Lambda_{n_k}\circ\cdots\circ \Lambda_{n_0}({\bfu})\right)$. 
\end{lemma}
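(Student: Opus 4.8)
The plan is to derive both assertions from the index-bookkeeping formula already established in the proof of Corollary \ref{cor:column=cartier}, rather than running a fresh induction. Specifically, equation \eqref{eq:ind-hyp} in that proof shows that for any digits $0 \le n_0, \dots, n_k \le \ell-1$,
\[
\left(\Lambda_{n_k}\circ\cdots\circ\Lambda_{n_0}(\bfu)\right)_m = u_{m\cdot\ell^{k+1} + n_k\cdot\ell^k + \cdots + n_1\cdot\ell + n_0}
\]
for every $m\in\I$ (this is \eqref{eq:ind-hyp} with degree $d=k$ and the index variable renamed to $m$). The point I would stress is that the induction establishing this formula never uses the sign of $m$, so the identity is valid for negative $m$ as well, which is precisely what the two-sided statement requires.

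For the non-negative case I would simply apply $\pi_0$, i.e.\ evaluate the formula at $m=0$. The $m\cdot\ell^{k+1}$ term vanishes, leaving the index $n_k\cdot\ell^k + \cdots + n_0$, which is $n$ by definition of the canonical expansion $(n)_\ell = n_k\cdots n_0$. This disposes of the first claim immediately.

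For the negative case I would apply $\pi_{-1}$, i.e.\ evaluate the formula at $m=-1$, to the composition $\Lambda_{\ell-1}\circ\Lambda_{n_k}\circ\cdots\circ\Lambda_{n_0}$, which consists of $k+2$ operators with leading digit $\ell-1$. The resulting index is
\[
-\ell^{k+2} + (\ell-1)\ell^{k+1} + n_k\ell^k + \cdots + n_0 = -\ell^{k+1} + n_k\ell^k + \cdots + n_0,
\]
since $-\ell^{k+2}+(\ell-1)\ell^{k+1}$ telescopes to $-\ell^{k+1}$. It then remains to check that this integer is exactly $n$. This is where the only genuine care is needed: using the eventually-constant expansion $(n)_\ell = \overline{\ell-1}\,n_k\cdots n_0$ together with the identity
\[
\sum_{i\ge k+1}(\ell-1)\ell^i = -\ell^{k+1},
\]
valid as the value of the integer encoded by this $\ell$-adic representation, one obtains $n = -\ell^{k+1} + n_k\ell^k + \cdots + n_0$, matching the computed index.

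The main (mild) obstacle is thus not the induction — which is inherited verbatim from Corollary \ref{cor:column=cartier} — but the arithmetic of the negative case: confirming both that the indexing formula legitimately applies at $m=-1$ and that the leading-digit contribution correctly telescopes, so that the projected index agrees with the value $n$ determined by its canonical expansion.
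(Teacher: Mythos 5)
Your proposal is correct and is essentially the paper's own argument: the authors dispose of this lemma with the remark that it is a straightforward induction, and that induction is precisely the index-bookkeeping formula \eqref{eq:ind-hyp} from the proof of Corollary \ref{cor:column=cartier} (valid for all $m\in\I$, negative included, since it only unwinds the definition of the $\Lambda_i$), which you specialise at $m=0$ and $m=-1$. Your explicit check of the negative case, namely the telescoping $-\ell^{k+2}+(\ell-1)\ell^{k+1}=-\ell^{k+1}$ together with the identification $n=-\ell^{k+1}+\sum_{i=0}^{k}n_i\ell^i$ for the canonical expansion $\overline{\ell-1}\,n_k\cdots n_0$, is exactly the arithmetic the paper leaves implicit, and it holds (degenerately) even when the finite digit block is empty, i.e.\ for $n=-1$.
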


\begin{theorem}\label{thm:automaton}
Let $\theta$ be a length-$\ell$ substitution on $\mathcal A$, in simplified form, and let $\mathcal S_\theta$  be the   structure semigroup of $\theta$. Then there is a minimal DFA $\mathcal M_\theta$, whose state set is $\mathcal S_\theta$, with the following property:
  for any fixed point  of $\theta$ with seed $ u_l\cdot u_r$,  there are output maps $\omega_l, \omega_r: \mathcal S_\theta \rightarrow \mathcal A$ where $(\mathcal M_\theta, \{\omega_l, \omega_r\})$ generates   ${\bfu}$ in reverse reading. 
\end{theorem}

\begin{proof}
Suppose that $\theta$ is defined on $\mathcal A=\{ a_0, \dots , a_d\}$.   We will  assume that we are given a bi-infinite fixed point, generated by 
$a_l \cdot a_r$.  The fact that $\theta$ is in simplified form implies that  $\theta(a_l)$ ends with $a_l$ and $\theta(a_r)$ starts with $a_r$.

We first define the finite-state automaton $\mathcal M=(\mathcal S_\theta, \Sigma, \delta, s_0, \mathcal A, \{\omega_r\})$ that generates the right part of ${\bfu}$. We will abuse notation and think of states as functions (technically they are  only labelled by functions). We let $\Sigma= \{ 0, \dots , \ell -1\}$,   and we let the initial state $s_0:=\id$. 
Given $a\in \mathcal A$, let $p_a:\mathcal S_\theta\rightarrow \mathcal A$ denote projection to the $a$-entry, $p_a(s):= s(a)$.

The output map $\omega_r: \mathcal S_\theta\rightarrow \mathcal A $ will be
$\omega_r=p_{a_r}.$ It remains to define the transition map $\delta$.

For $0\leq i \leq \ell -1$, define $\delta(\id, i): = \theta_i$.
Note that by definition any element  $s$ in $ \mathcal  S_\theta$ can be written as $ s =\theta_{n_0} \circ \dots \circ \theta_{n_k}$ for some $0\leq n_0, 
\dots ,n_k\leq \ell -1$. Now set  $\delta(s,i):= \theta_{n_0} \circ\cdots\circ  \theta_{n_k} \circ \theta_{i} (s_0)$; this is well-defined.

It remains to show that $\mathcal M$ generates the right-infinite part of ${\bfu}$. If $(n)_\ell = n_k \cdots n_0$, then
\[ u_n \stackrel{Lemma \,\, \ref{lem:projection}}{=} \pi_0 \left( \Lambda_{n_k}\circ\cdots \circ\Lambda_{n_0}({\bfu})\right)      \stackrel{Corollary  \,\,\ref{cor:column=cartier}}{=} \pi_0  \left( {\boldsymbol \theta}_{n_0}\circ\cdots \circ{\boldsymbol \theta}_{n_k}({\boldsymbol u})\right)    
\stackrel{Lemma\,\, \ref{lem:projection-0}}{=}  { \theta}_{n_0}\circ\cdots \circ{ \theta}_{n_k}( p_{a_r}(\id)), \]
and ${ \theta}_{n_0}\circ\cdots\circ { \theta}_{n_k}( p_{a_r}(\id)) =  p_{a_r}\left({ \theta}_{n_0}\circ\cdots\circ { \theta}_{n_k}(\id)\right)$ as $p_{a_r}$ commutes with each $\theta_i$. The result follows.

The left part of ${\boldsymbol u}$ will be generated similarly, except with the output map $\omega_{a_l}$. Furthermore, the states are in one-to-one correspondence with the kernel of $\theta$.
This can be seen using Lemma \ref{lem:projection}, which identifies the elements of the kernel as the images under compositions of the maps $\Lambda_i$. Hence a state is identified with the composition of the appropriate maps $\Lambda_i$ indexed by the edges in the path leading to that state. The initial state is identified with the fixed points.
 In particular, if a state is labelled by $s\in \mathcal S_\theta$, then it represents that sequence ${\bf s}( {\bfu})$. Thus, $\mathcal M$ is minimal by Eilenberg's theorem.
\end{proof}
\begin{definition}\label{def:automaton}
Let $\theta$ be a length-$\ell$ substitution on $\mathcal A$, in simplified form, and let $\mathcal S_\theta$  be the   structure semigroup of $\theta$. The  minimal DFA $\mathcal M_\theta$ defined in Theorem \ref{thm:automaton} is called the {\em semigroup automaton associated to $\theta$}.
\end{definition}

\begin{example}\label{bigdiag_rrex}
We look at  Example \ref{bigdiag_drex} again.  Recall that we have to consider $\eta:=\theta^2$ in order to work with a substitution in simplified form, if we want an automaton that generates bi-infinite fixed points. We start though with the simpler $\theta$, which is left simple, i.e., $\theta_0$ is an idempotent, so  we can construct with it an automaton that will generate right infinite $\theta$-fixed points; see Figure \ref{onesided}.

A minimal reverse-reading automaton for $\eta=\theta^2$, with states labelled using $\mathcal S_\theta$,  is  given in Figure \ref{fig:my_label}. To simplify the presentation of this automaton, Table  \ref{hello} lists the states $s_i\in \mathcal S_\theta$  in Figure \ref{fig:my_label}.

\begin{table}[ht]
\begin{center}
\renewcommand{\arraystretch}{1.4}
  \begin{tabular}{|c|c|c|c|c|}
   \hline
 $ s_0=\{a,b,c\}^T$ & $s_1=\{a,b,b\}^T$ & $s_2=\{c,a,a\}^T$ & $s_3=\{b,a,a\}^T$ & $s_4=\{b,a,b\}^T$ \\ 
    \hline
  $s_5=\{b,c,a\}^T$ &
  $s_6=\{a,b,a\}^T $& $s_7=\{a,c,c\}^T$ & $s_8=\{b,b,a\}^T $& $s_9=\{a,c,a\}^T$\\
    \hline
    $s_{10}=\{a,a,c\}^T $& $s_{11}=\{c,a,c\}^T$
& $s_{12}=\{a,a,b\}^T$ & $s_{13}=\{b,c,c\}^T$ & $s_{14}=\{c,b,b\}^T$\\
\hline
  $s_{15}=\{c,b,c\}^T $& $s_{16}=\{c,a,b\}^T$ & $s_{17}=\{b,c,b\}^T$
  &
  $s_{18}=\{c,c,a\}^T$ &$ s_{19}=\{c,c,b\}^T$\\
  \hline 
    $s_{20}=\{b,b,c\}^T$ & $s_{21}=\{a,a,a\}^T$ & $s_{22}=\{b,b,b\}^T$ & $s_{23}=\{c,c,c\}^T$ & \\
  \hline
  \end{tabular}\caption{The states in Figure \ref{fig:my_label}}\label{hello}
  \end{center}
\end{table}
\end{example}

\begin{figure}
    \centering
    \hspace*{-0.15in}
    \includegraphics[scale=0.7]{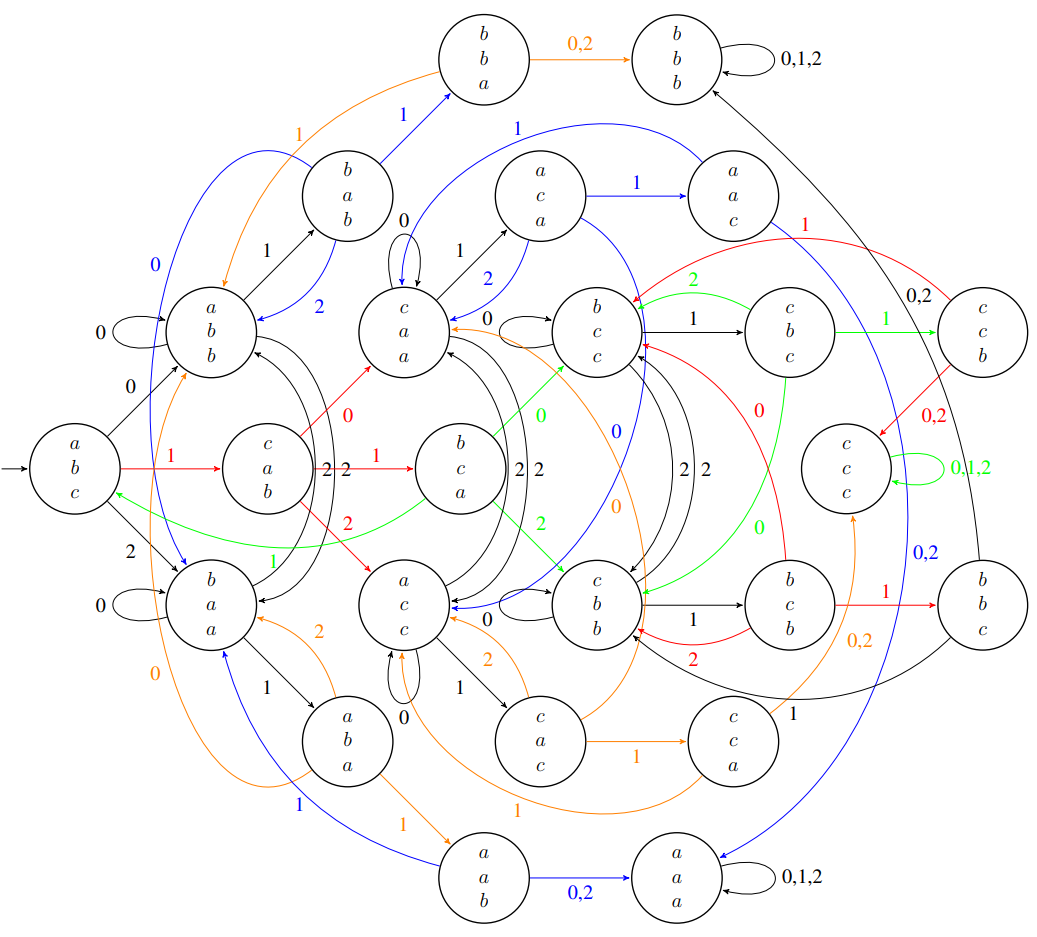}
    \caption{Reverse-reading automaton for Example \ref{bigdiag_rrex} generating a right-infinite one-sided automatic sequence; Any two intersecting, or close, edges  with the corresponding labels are coloured differently for more clarity.}
    \label{onesided}
\end{figure}

\begin{figure}
    \centering
    \hspace*{-0.2in}
    \includegraphics[scale=0.38]{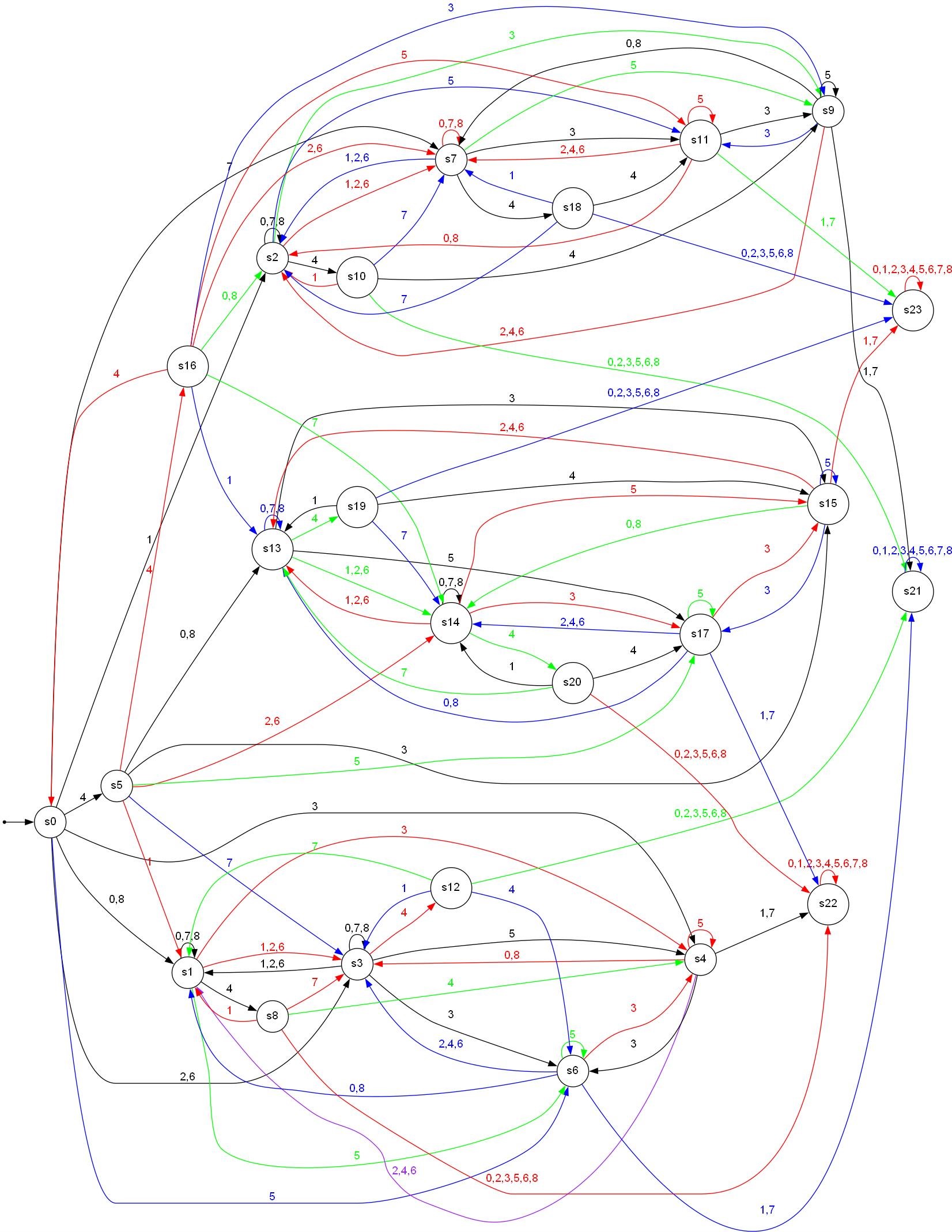}
    \caption{Reverse-reading automaton that generates bi-infinite fixed points for Example \ref{bigdiag_rrex}; Any two intersecting, or close, edges  with the corresponding labels are coloured differently for more clarity. Created by Graphviz web app.}
    \label{fig:my_label}
\end{figure}

\section{Toeplitz sequences and Toeplitz shifts}

There are many definitions of Toeplitz sequences, and a vast literature on Toeplitz shifts; we use the notions which bring us directly to our setting, which is that of substitution sequences.

   Let $\mathcal{A}$ be a finite, and let $\mathcal{A}^\mathbb{Z}$ denote the set of $\Z$-indexed infinite sequences over $\mathcal{A}$. 
 Endowing $\mathcal{A}$  with the discrete topology, we equip  $\mathcal{A}^\mathbb{Z}$ with the metrisable product topology.
 A {\em shift dynamical system}, or {\em shift},  is a pair $(X,\sigma)$ where $X$ is a closed $\sigma$-invariant set of $\mathcal A^\mathbb Z$ and $\sigma\colon \mathcal A^\mathbb Z\to \mathcal A^\mathbb Z$ is the left shift map  $(x_n)_{n\in\mathbb{Z}} \mapsto (x_{n+1})_{n\in\mathbb{Z}}$. 
     A  shift is {\em minimal} if it  has no non-trivial closed shift-invariant subsets.   
    We say that ${\bfx} \in \mathcal{A}^\mathbb{Z}$ is (shift-)\emph{periodic} if $\sigma^k({\bfx}) = {\bfx}$ for some $k \ge 1$, \emph{aperiodic} otherwise.
The~shift $(X,\sigma)$ is said to be aperiodic if each ${\bfx} \in X$ is aperiodic.   For basics on
  continuous and measurable dynamics see Walters \cite{Walters}. 

The simplest  to generate a shift is to  take a point ${\bfx}\in \mathcal{A}^\mathbb{Z}$ and $X$ to be the shift orbit closure of ${\bfx}$, i.e, $X:=\overline{\{ \sigma^n({\bfx}): n\in \Z\}}$.

If the  substitution $\theta$ is primitive, then the shift orbit closure of a $\theta$-periodic point is minimal and furthermore each $\theta$-periodic point generates the same shift space.  We write $(X_\theta,\sigma)$ to denote this shift, and 
we call it a \emph{substitution shift}. 
The substitution $\theta$ is \emph{aperiodic} if $(X_\theta,\sigma)$ is aperiodic. For details on the above and a study of  substitution shifts, see \cite{Queffelec}.

\subsection{The column number of a substitution}
Let $\theta$ be a primitive length-$\ell$ substitution with fixed 
point $\bfu$, and with $(X_\theta,\sigma)$  infinite. 
The {\em height }$h=h(\theta)$ of $\theta$   is defined as 
\[
h(\theta):= \max \{n\geq 1: \gcd(n,\ell)=1, n | \gcd\{a: u_a=u_0 \} \}\, .
\]
If $h>1$, this means that $\mathcal A$ decomposes into $h$ disjoint 
subsets: $\mathcal A_1\cup\dots\cup \mathcal A_h$, where a symbol from
$\mathcal A_i$ is always followed by a symbol from $\mathcal A_{i+1 \mod{h}}$ \cite{Dekking1977}.
The following theorem tells us that a length-$\ell$ substitution shift is a constant height suspension over another length-$\ell$ substitution shift which has trivial height $h=1$.

\begin{theorem}\label{thm:height} \cite[Remark 9, Lemmas 17 and 19]{Dekking1977}
If $\theta$  is a primitive aperiodic length-$\ell$ substitution defined on 
$\mathcal A$, and has height  $h$, then there is a length-$\ell$ substitution $ \bar\theta$ which  has trivial height, and  
$(X_{\theta}, \sigma) 
\cong(X_{  \bar  \theta}\times \{0,\dots,h-1\},T)$ where

\begin{equation*}
 T(x,i)\coloneqq 
\begin{cases}
  (x,i+1)      & \text{ if  } 
0\leq  i<h-1    \\
    (\bar \sigma(x), 0)   & \text{ if } i=h-1
\end{cases}
\end{equation*} 

\end{theorem}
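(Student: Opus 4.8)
The plan is to realise the conjugacy through the cyclic ``height'' factor and to build $\bar\theta$ by a phase-aligned block recoding. I take the two-sided setting $\I=\Z$, so that $\sigma$ is a homeomorphism of $X_\theta$. The decomposition $\mathcal A=\mathcal A_1\cup\cdots\cup\mathcal A_h$ recalled just above the statement (from \cite{Dekking1977}) is the starting point: since a symbol from $\mathcal A_i$ is always followed by a symbol from $\mathcal A_{i+1\bmod h}$, each letter occurs in $\bfu$ only at positions lying in a single residue class modulo $h$, giving a phase map $\tau:\mathcal A\to\{0,\dots,h-1\}$ normalised so that $\tau(u_n)=n\bmod h$. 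Consecutive letters have consecutive phases for every word occurring in $\bfu$, hence for every $x\in X_\theta$; therefore $\pi(x):=\tau(x_0)$ defines a continuous factor map $\pi:(X_\theta,\sigma)\to(\{0,\dots,h-1\},+1\bmod h)$ onto the cyclic rotation.

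The heart of the construction is that $\theta$ respects phase-aligned blocking. Group the fixed point into blocks $\bar u_k:=u_{kh}u_{kh+1}\cdots u_{kh+h-1}$ and let $\bar{\mathcal A}$ be the finite set of blocks so obtained. Because $\theta(u_m)=u_{\ell m}\cdots u_{\ell m+\ell-1}$ and these images concatenate along $\bfu$, one computes $\theta(\bar u_k)=u_{\ell k h}\cdots u_{\ell k h+\ell h-1}$, a word of length $\ell h$ whose $j$-th length-$h$ sub-block starts at position $(\ell k+j)h$ and therefore equals $\bar u_{\ell k+j}$. Declaring $\bar\theta(\bar u_k):=\bar u_{\ell k}\bar u_{\ell k+1}\cdots\bar u_{\ell k+\ell-1}$ thus yields a well-defined length-$\ell$ substitution on $\bar{\mathcal A}$ (well-defined because $\theta$ applied to a block depends only on the block), whose fixed point is $\bar{\bfu}=(\bar u_k)_k$; let $(X_{\bar\theta},\bar\sigma)$ be its shift.

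I would then exhibit the conjugacy explicitly as $\phi(x)=(\psi(x),\pi(x))$, where for $i=\pi(x)$ the point $\sigma^{-i}x$ is phase-aligned and $\psi(x)\in X_{\bar\theta}$ is its $h$-block recoding. A direct check gives $\phi\circ\sigma=T\circ\phi$: when $i<h-1$, applying $\sigma$ raises the phase to $i+1$ while $\sigma^{-(i+1)}\sigma x=\sigma^{-i}x$ leaves the aligned recoding unchanged, matching $(x,i)\mapsto(x,i+1)$; when $i=h-1$ the phase returns to $0$ and the recoding advances by exactly one block, i.e.\ by $\bar\sigma$, matching $(x,h-1)\mapsto(\bar\sigma x,0)$. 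Continuity of $\pi$ and of the bounded-shift-then-block map $\psi$, together with the evident de-blocking inverse, make $\phi$ a homeomorphism, hence a topological conjugacy.

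The step I expect to be the main obstacle is verifying that $\bar\theta$ has trivial height. The naive lift ``$u_a=u_0\Rightarrow\bar u_{a/h}=\bar u_0$'' fails, since a phase-$0$ letter need not determine the block it begins, so I would instead argue through the maximal equicontinuous factor: for a constant-length-$\ell$ substitution the height is the order of the largest cyclic equicontinuous factor of order coprime to $\ell$ \cite{Dekking1977}, and the conjugacy just built presents $(X_\theta,\sigma)$ as a height-$h$ tower over $(X_{\bar\theta},\bar\sigma)$. Consequently any cyclic factor of $X_{\bar\theta}$ of order $h'>1$ coprime to $\ell$ would combine with the rotation on $\{0,\dots,h-1\}$ to produce a cyclic factor of $X_\theta$ of order $hh'$, still coprime to $\ell$, contradicting the maximality defining $h$. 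Hence $h(\bar\theta)=1$, and the proof is complete.
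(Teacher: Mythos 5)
This theorem is not proved in the paper at all: it is imported verbatim from \cite{Dekking1977} (Remark 9, Lemmas 17 and 19), so your attempt is really a reconstruction of Dekking's argument rather than an alternative to anything in this text. The reconstruction is largely sound, and it does match Dekking's route: the phase map $\tau$ coming from the decomposition $\mathcal A=\mathcal A_1\cup\cdots\cup\mathcal A_h$, the $h$-block recoding $\bar u_k=u_{kh}\cdots u_{kh+h-1}$ with $\bar\theta(\bar u_k):=\bar u_{\ell k}\cdots\bar u_{\ell k+\ell-1}$ (well-defined, as you note, because $\theta$ is a morphism and the sub-blocks of $\theta(\bar u_k)$ start at multiples of $h$), and the tower conjugacy $\phi(x)=(\psi(x),\pi(x))$ whose equivariance check is correct in both the $i<h-1$ and $i=h-1$ cases. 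You were also right to reject the naive lift ``$u_a=u_0\Rightarrow\bar u_{a/h}=\bar u_0$'': a phase-$0$ letter does not determine its block, and this is exactly why Dekking's Lemma on the pure base requires a genuine argument.

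Two caveats, one minor and one substantive. Minor: you never verify that $\bar\theta$ is primitive (and aperiodic), yet the paper's definition of height, and Theorem~\ref{thm:dekking} which you invoke, are only available for primitive aperiodic substitutions; this is repairable, since each letter of $\mathcal A$ determines its residue class mod $h$, so every occurrence of a block word in $\bfu$ sits at a phase-$0$ position, and uniform recurrence of $\bfu$ then yields primitivity of $\bar\theta$, while periodicity of $\bar{\bfu}$ would force periodicity of $\bfu$. Substantive: your trivial-height step rests on the characterisation of $h$ as the maximal order of a cyclic equicontinuous factor coprime to $\ell$, i.e.\ on Theorem~\ref{thm:dekking}. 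In Dekking's development the identification of the maximal equicontinuous factor as $\Z_\ell\times\Z/h\Z$ is itself \emph{derived from} the pure-base tower decomposition you are trying to prove (one first treats the trivial-height case, where the factor is $\Z_\ell$, and then extends via the tower), so as a self-contained proof of Dekking's lemma your argument is circular. Within this paper's conventions, where Theorem~\ref{thm:dekking} is quoted as a black box, the derivation is formally legitimate (and your lifting of a $\Z/h'\Z$-factor of $X_{\bar\theta}$ to a $\Z/hh'\Z$-factor of the tower via $(x,i)\mapsto hg(x)+i$ is correct), but you should either flag this dependence explicitly or replace the step by Dekking's direct combinatorial argument for the height of the blocked substitution.
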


The substitution $\bar\theta$ is
 called the {\em pure base} of $\theta$. If $\theta$ has trivial height $h=1$, then it is its own pure base.

Let $\theta$ have pure base $\bar\theta$.
We say that $\theta$ {\em has column number $c$}  if for some $k\in \N$, 
and some $(i_1, \dots ,i_k)$, 
$|\bar\theta_{i_1}\circ \cdots \circ\bar\theta_{i_k}(\mathcal A)|=c$
is the least such number. 
In particular, if $\theta$ has column number one, then we will say 
that  $\theta$ {\em has a coincidence}.

\subsection{Odometers and Toeplitz shifts}\label{sec:odometer}

Given a sequence $\left(\ell_n\right)_{n\geq 1}$ of natural numbers,
we work with the group \[\Z_{(\ell_n)} :=\prod_{n\geq 1} \Z/\ell_n\Z,\] where the group operation is given by coordinate-wise addition with carry.
For a detailed exposition of equivalent definitions of $\Z_{(\ell_n)}$, we refer the reader to \cite{Downarowicz2005}. 
Endowed with the product topology over the discrete topology on each $\Z/\ell_n\Z$, the group $\Z_{(\ell_n)}$ is a compact metrizable  topological group, 
where the unit $z=\bar{0}1\cdot$, which we simply write as $z=1$, is a topological generator.
We write elements $(z_n)$ of $\Z_{(\ell_n)}$ as left-infinite sequences $\dots z_2z_1\cdot $ where $z_n\in \Z/\ell_n\Z$,
so that addition in $\Z_{(\ell_n)}$ has the carries 
propagating to the left as usual in $\Z$. If $\ell_n=\ell$ is constant, then  $\Z_{(\ell_n)}= \Z_\ell$ is the classical ring of $\ell$-adic integers. Equivalently,
we can  define $\Z_{(\ell_n)}$   as the inverse limit     $\Z_{(\ell_n)}   =\varprojlim \Z/ (m_n \Z)$     of cyclic groups, where $m_n\coloneqq  \ell_1\cdots \ell_{n}$.

With the above notation, an {\em odometer} is a dynamical system $(\Z_{(\ell_n)}, +1)$.
 Odometers are  \emph{equicontinuous}, i.e., the family of function $\{+n : n\in\Z\}$ is equicontinuous. 

An equicontinuous factor of $(X,T)$ is {\em maximal} if any other equicontinuous factor of $(X,T)$ factors through it; the maximal equicontinuous factor always exists. 
A {\em  Toeplitz shift} is a symbolic shift $(X,\sigma)$, $X\subset \mathcal A^{\Z}$, 
which is a somewhere one-to-one extension of an odometer. Toeplitz shifts are always minimal. The following theorem tells us that the Toeplitz  shifts that are also length-$\ell$ substitution shifts are precisely those that  are primitive and  with column number one. The first two parts are due to Dekking \cite{Dekking1977}.

\begin{theorem} \label{thm:dekking}
Let $\theta$  be an aperiodic  primitive,  length-$\ell$ substitution of height $h$.
Then 
\begin{itemize}
\item
the maximal equicontinuous factor of $(X_\theta, \sigma)$ is the odometer $(\Z_\ell \times \Z/h\Z , (+1,+1))$, 
\item
 $(X_\theta,\sigma)$ is a Toeplitz shift if and only if $\theta$ has column number one, and
 \item
if $\theta$ has  column number one, then it has trivial height.
\end{itemize}
\end{theorem}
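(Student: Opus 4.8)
The plan is to produce an explicit odometer factor, prove it is maximal, and then read off the fibre sizes from the column maps of the pure base. Throughout I would reduce to the pure base $\bar\theta$ via Theorem~\ref{thm:height}, which gives $(X_\theta,\sigma)\cong(X_{\bar\theta}\times\Z/h\Z,T)$ with $T$ the height-$h$ tower map. Since $\gcd(\ell,h)=1$ by the definition of the height, translation by the topological generator of the tower over $(\Z_\ell,+1)$ is topologically conjugate to the product odometer $(\Z_\ell\times\Z/h\Z,(+1,+1))$; hence it suffices to identify the maximal equicontinuous factor of the pure base with $(\Z_\ell,+1)$ and to check that the finite $\Z/h\Z$ coordinate is exactly the extra equicontinuous information contributed by the height.

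For the pure base (trivial height) I would build the factor map $\pi\colon X_{\bar\theta}\to\Z_\ell$ from the self-similar structure. By recognisability of aperiodic primitive substitutions (Moss\'e), each $x$ has, for every $k$, a unique decomposition into level-$k$ words $\bar\theta^k(a)$; the position of the origin within its level-$k$ word is a residue in $\Z/\ell^k\Z$, and these residues cohere into an element $\pi(x)\in\Z_\ell$. One checks that $\pi$ is continuous, onto, and intertwines $\sigma$ with $+1$, so $(\Z_\ell,+1)$ is an equicontinuous factor. The substance is maximality: I would show that every continuous eigenvalue of $(X_{\bar\theta},\sigma)$ lies in $\widehat{\Z_\ell}$, i.e. is an $\ell$-power root of unity. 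The triviality of the height is exactly what kills the rational eigenvalues $\ee^{2\pi\ii/h}$, so nothing survives outside the $\ell$-adic part and $\pi$ is maximal; equivalently one identifies the regionally proximal relation with the fibres of $\pi$.

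For the second assertion, since the maximal equicontinuous factor is the odometer and any odometer factor passes through it, $(X_\theta,\sigma)$ is Toeplitz if and only if $\pi$ is somewhere one-to-one; as the $\Z/h\Z$ coordinate is finite this reduces to whether the pure-base map $X_{\bar\theta}\to\Z_\ell$ is somewhere one-to-one. Fix $z=\dots z_1z_0\in\Z_\ell$. By Lemma~\ref{lem:projection-0} the letters that can occur at the central coordinate of a point with address $z$ are exactly the elements of the stabilised image $C(z):=\bigcap_{k\ge1}\bar\theta_{z_0}\circ\cdots\circ\bar\theta_{z_{k-1}}(\mathcal A)$, and $\pi^{-1}(z)$ is a single point if and only if $C(z+n)$ is a singleton for every $n\in\Z$. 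If $\theta$ has column number one there is a digit word $i_1\cdots i_k$ with $|\bar\theta_{i_1}\circ\cdots\circ\bar\theta_{i_k}(\mathcal A)|=1$; then $C(w)$ is a singleton whenever this word occurs as a factor of the digit expansion of $w$. For each fixed $n$ the set of $z$ whose translate $z+n$ contains this factor is open and dense, so by Baire the set of $z$ for which $C(z+n)$ is a singleton simultaneously for all $n\in\Z$ is residual; any such $z$ has a singleton fibre, so $\pi$ is (even residually) one-to-one and $X_\theta$ is Toeplitz. Conversely, if the column number is $c\ge2$ then every composition of column maps has image of size at least $c$, so $|C(z)|\ge c\ge2$ for all $z$; realising two distinct admissible central letters by a compactness argument yields at least two points in each fibre, whence $\pi$ is nowhere one-to-one and $X_\theta$ is not Toeplitz.

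I expect the genuine obstacle to be the maximality in the first assertion: producing the odometer factor is routine from recognisability, but proving it is the \emph{maximal} equicontinuous factor — equivalently, computing the continuous eigenvalue group and showing that the height-$1$ hypothesis removes every non-$\ell$-adic eigenvalue — is the real content of Dekking's analysis, and it is also where the bookkeeping of the $\Z/h\Z$ factor and the hypothesis $\gcd(\ell,h)=1$ must be handled with care. The remaining secondary point is the compactness argument in the converse of the second assertion, ensuring that each of the $\ge c$ admissible central letters is actually realised by a genuine point of $X_\theta$ lying over $z$.
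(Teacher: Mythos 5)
The paper itself offers no proof of this statement: it is Dekking's classical theorem, imported from the literature (the same source \cite{Dekking1977} cited for Theorem~\ref{thm:height}), so there is no in-paper argument to compare yours against. Judged on its own merits, your outline follows the standard proof correctly in most respects. The reduction to the pure base via the tower conjugacy $(z,i)\mapsto(hz+i,i)$, which uses $\gcd(h,\ell)=1$, is right; the construction of $\pi\colon X_{\bar\theta}\to\Z_\ell$ from Moss\'e recognisability is the standard one; and your treatment of the second assertion is essentially complete. In particular the nested-image set $C(z)$ is the right object (the images stabilise since they are finite and decreasing, so $|C(z)|\geq c$ always), the Baire argument correctly produces an address all of whose translates contain the coincidence word, and your flagged compactness step in the converse is genuinely needed but works: by K\"onig's lemma each letter of the stabilised image extends to a backward-coherent sequence of supertile seeds, and distinct central letters force distinct points in the fibre, giving $|\pi^{-1}(z)|\geq c\geq 2$ everywhere. (One point deserving care there: over addresses in the $\Z$-orbit of integer points the supertiles grow to one side only, so the coherent sequence does not determine a full point and you must complete it by a limit argument while staying inside $\pi^{-1}(z)$ --- this is exactly the subtlety you gestured at.)

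The genuine gap is the one you yourself identify but do not close: maximality of the odometer factor. Your sentence ``the triviality of the height is exactly what kills the rational eigenvalues, so nothing survives outside the $\ell$-adic part'' is an assertion of the conclusion, not an argument. Triviality of height is a combinatorial condition on the fixed point ($\gcd$ of return positions of $u_0$), and connecting it to the absence of continuous eigenvalues $\ee^{2\pi\ii/p}$ for primes $p\nmid\ell$ is precisely the content of Dekking's eigenvalue computation: one must show the continuous eigenvalue group of a primitive aperiodic length-$\ell$ substitution of height $h$ is exactly $\{\ee^{2\pi\ii m/(h\ell^k)}: k\geq 0,\ m\in\Z\}$, which requires relating hypothetical rational eigenvalues back to the return structure of letters (equivalently, identifying the regionally proximal relation with the fibres of $\pi$, which you mention but also do not carry out). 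As a self-contained proof your proposal is therefore incomplete at exactly this step; either execute that computation or do what the paper does and cite \cite{Dekking1977} for it, in which case the rest of your argument for the Toeplitz characterisation stands.
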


\begin{proof}
The first statement is proved in \cite[Theorem 13]{Dekking1977}. 

Let $\pi:     X_\theta \rightarrow  (\Z_\ell \times \Z/h\Z , (+1,+1))            $ be a maximal equicontinuous factor map. 
To prove the second statement, we will use  the fact that the minimal cardinality of  $\pi^{-1}(z)$, for $z\in  \Z_\ell \times \Z/h\Z$,  equals the column number of $\theta$ \cite[Theorem 3.1i]{Dekking1977}.
Suppose that  $(X_\theta,\sigma)$ is a Toeplitz shift, so that by definition, it is a somewhere one-to-one extension of an odometer. But then this odometer   {\em must} be the maximal equicontinuous factor of $(X_\theta,\sigma)$  \cite[Proposition 1.1]{Williams}. Thus $(X_\theta,\sigma)$ is a somewhere one-to-one extension of  $(\Z_\ell \times \Z/h\Z , (+1,+1))$, which implies that $\theta$ has column number one.
Conversely, if $\theta$ has column number one, then $\pi$ is somewhere one-to-one so that  $(X_\theta,\sigma)$ is Toeplitz.

The third statement is proved by Lemanzcyk and Muellner in  \cite[Lemma 2.2]{Lemanczyk-Mullner}.

\end{proof}

\subsection{Toeplitz sequences}\label{Toeplitz-sequence} A {\em Toeplitz sequence} is a  sequence that is obtained by filling its entries one arithmetic sequence of indices at a time  with a constant symbol, as follows.
Let $\mathbb I= \mathbb N_0$  or $\mathbb Z$.  Let $\mathcal A$ be a finite alphabet. For $\bfx\in \mathcal A^{\mathbb I}$, $k\in \N$ and $a\in \mathcal A$, we define recursively
\[ \Per_k(\bfx,a):= \{ n\in \mathbb I: x_{n'} = a \mbox{ for all } n'\equiv n \bmod k   \mbox{ and } n \not\in \Per_j(\bfx,b) \mbox{ for any } b\in \mathcal A \mbox{ and } j|k   \},\]
\[ \Per_k(\bfx) := \bigcup_{a \in\mathcal A}  \Per_k(\bfx,a),\]
\[ \Per(\bfx) := \bigcup_{k}  \Per_k(\bfx),\]
and 
\[ \Aper(\bfx):=\mathbb I \backslash       \Per(\bfx)     .\]
We say that the sequence $\bfx$ is  {\em Toeplitz} if   $\Aper(\bfx)=\emptyset$.   If  $\bfx$ is Toeplitz and  $\{k_n: n\in \N\}$ is the set of integers such that $\Per_k(\bfx)\neq \emptyset$, we call $\{k_n: n\in \N\}$ the {\em set of essential periods of $\bfx$}. A set of essential periods can be turned into a {\em period structure} $\{m_n: n\in \N\}$, where each $m_n$  is an essential period and $m_n $ divides $m_{n+1}$; see \cite[Proposition 2.1]{Williams}.
  The following result gives the connection between Toeplitz sequences and Toeplitz shifts. Namely,  a Toeplitz shift always contains  Toeplitz sequences, and a Toeplitz sequence generates a Toeplitz shift.  It dates back to   Williams, for details see \cite[Theorem 2.2, Corollary 2.4]{Williams}; see also \cite[Theorem 5.1]{Downarowicz2005}.
  \begin{theorem}\label{thm:Toeplitz}
 If ${\bfx}$ is a Toeplitz sequence, and $X$ is its shift orbit closure, then $(X,\sigma)$ is a Toeplitz shift whose maximal equicontinuous factor is the odometer $(\varprojlim \Z_{(m_n)},+1)$ generated by the essential period structure of ${\bfx}$.   Conversely, if $(X,\sigma)$ is a Toeplitz shift, with the somewhere injective equicontinuous factor $\pi :X\rightarrow \Z_{(\ell_n)} $,  then any point ${\bfx}$  such that $\{{\bfx}\}= \pi^{-1}(\pi({\bfx}))$ is a Toeplitz sequence, and its period structure generates $\varprojlim \Z_{(m_n)} $.  
  \end{theorem}
   \begin{definition}\label{def:semicocycle}
 The {\em semicocycle} defined by $\left(x_n\right)_{n\in \I}$ is the map $n\in \I \mapsto x_n \in \mathcal A$.   \end{definition}

  To discuss the discontinuities of a semicocycle, we briefly describe the topology that we impose on $\mathcal A$ and   $\I$. The finite set  $\mathcal A$ has the discrete topology.
 By   Theorem
	 \ref{thm:dekking}, a Toeplitz shift $(X,\sigma)$ has the odometer $(\Z_\ell, +1)$ as maximal equicontinuous factor. The set $\I$ inherits a topology from $\Z_\ell$.  For, $\I$ is included in $\Z_\ell$ via the map which sends $n$ to its base-$\ell$ expansion, as described in Section \ref{sec:preliminaries}. From this inclusion $\I\subset \Z_\ell$,   the integers  $m,n\in \I$ are close if and only if their base-$\ell$ expansions agree on a large initial block.
	 
	Next we will  see that this choice of topology implies that semicocycle discontinuities are linked to arithmetic progressions of common difference $\ell^j$.
We write such a progression as 
\[\boldsymbol{a}_{(i,j)}:=(k\ell^j +i )_{k\in\I}\]
 where $ |i|\leq \ell^j-1$.
We say that a sequence $\bfx$ is constant on $\boldsymbol{a}_{(i,j)}$ if there is an $ a$ such that $x_n=a$ whenever $n\in\boldsymbol{a}_{(i,j)}$.
\begin{lemma}\label{lem:semicocycle}
Let $f: \I \to {\mathcal A}$ be the semicocycle defined by $\bfx$. Then $f$ is discontinuous at n if and only if for all $j$, there exists an arithmetic progression $\boldsymbol{a}_{(i,j)}$ of common difference $\ell^j$ such that $n\in \boldsymbol{a}_{(i,j)}$ and $\bfx$ is not constant on $\boldsymbol{a}_{(i,j)}$. 
  \end{lemma}
     \begin{proof}
First  suppose that  $f$ is discontinuous at $n$. This implies that for all $j$, there exists $m$ such that $(m)_\ell$ and $(n)_\ell$ agree on the $j$ least significant  entries, but $x_m \neq x_n$. In other words, there is $| i| <\ell^j$ such that 
 \[ n=k\ell^j +i \text{ and }m=k'\ell^j +i;\]
  so that  $n,m\in \boldsymbol{a}_{(i,j)}$, and $\bfx$ is not constant on $\boldsymbol{a}_{(i,j)}$.
 
 Conversely, suppose that there is an $n$  such that for each $ j$ there is an $ \boldsymbol{a}_{(i,j)}$  with $n\in \boldsymbol{a}_{(i,j)}$, but where there is  $m\in \boldsymbol{a}_{(i,j)} $ with 
 $x_m\neq x_n$.
 Since $m,n\in \boldsymbol{a}_{(i,j)}$, we have  $d(n,m)<1/{\ell^j}$. Since such $m$ can be found for any $j$,  we have shown that the semicocycle $f$ is discontinuous at $n$.
\end{proof}

Note that a  substitution shift may be Toeplitz, but its fixed points are not necessarily  Toeplitz sequences,  as  Example \ref{pd_ex} shows.

\begin{example}\label{pd_ex}
Recall the period-doubling substitution $\theta$ from Example \ref{pd intersection example}. 
We will show that the unique right-infinite fixed point for $\theta$ is a Toeplitz sequence, but neither of the bi-infinite fixed points for $\theta^2$ are Toeplitz.
A minimal reverse-reading automaton that generates the period-doubling sequence $\bfx$ with states as $\theta_i\in\mathcal A^{\mathcal A}$ is the following:
\begin{figure}[H]
\centering
\begin{tikzpicture}
\tikzset{->,>=stealth',shorten >=1pt,node distance=2.5cm,every state/.style={thick, fill=white},initial text=$ $}
\node[state,initial] (q0) {$\begin{matrix}a\\b\end{matrix}$};
\node[state, above of=q0] (q1) {$\begin{matrix}a\\a\end{matrix}$};
\node[state, right of=q0] (q2) {$\begin{matrix}b\\b\end{matrix}$};
\draw 
(q0) edge[left] node{0,2} (q1)
(q0) edge[below] node{1} (q2)
(q0) edge[loop below] node{3} (q0)
(q1) edge[loop right] node{0,1,2,3} (q1)
(q2) edge[loop right] node{0,1,2,3} (q2)
;
\end{tikzpicture}
\caption{A minimal reverse-reading automaton generating $\bfx$}
\label{period_doubling_machine}
\end{figure}
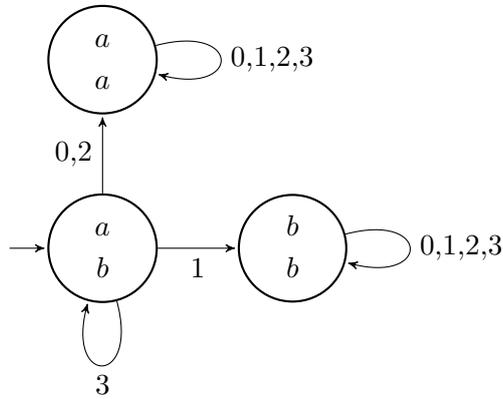

\noindent Now, we generate the fixed point of $\theta^2$ using the algorithm used to define the Toeplitz sequence. To generate the sequence $\bfx\in\mathcal A^\Z$, we take $k=2^i$ for each iteration of choice of an arithmetic sequence; i.e., $i=1,2,\dots$ and fill the letters $a$ and $b$ on alternate iterations starting with $a$. For $i=1$, $\Per_2(\bfx,a)=2\I$. This is because $\theta_0 = (a,a)^{T}$. Next,   $i=2\implies\Per_4(\bfx,b)=4\I+1$, because 
$(\theta^{2})_1=(b,b)^T$. Similarly,  $i=3\implies\Per_8(\bfx,a)=8\I+3$; $i=4\implies\Per_{16}(\bfx,b)=16\I+7$, and so on, so that $\Per_k(\bfx)=\I \backslash \{-1\}$. So, $\Aper(\bfx)=\{-1\}\neq\emptyset$. Another way to state this is that every entry in the two fixed points possible is the same except  at $x_{-1}$. Hence, the bi-infinite fixed points are not Toeplitz sequences.  Although, notice that the right-infinite fixed point of $\theta$ is in fact a Toeplitz sequence, as every index in $\N_0$ belongs to an arithmetic progression. 
The column number of the substitution $\theta$ is one, which confirms that it generates a Toeplitz shift. \end{example}

 Recall that  each state $s$ in the semigroup  automaton $\mathcal M$ is labelled by  a map  $f_s:\mathcal A \rightarrow \mathcal A$. 
 Call the state $s$ a {\em $k$-vertex} if 
  $|k| = |\Image f_s|$. If $s$ is a $1$-vertex, then $f_s$ is the function which is  projection to the letter $a$.

\begin{definition}[Reduced graph of a substitution]
Let $\theta$ have a coincidence. If we remove from its semigroup  automaton all $1$-vertices, and all edges leading to them, we are left with the {\em reduced graph} of $\theta$.
\end{definition}

From Example \ref{pd_ex}, the reduced graph of $\theta^2$ would be the following:
\begin{figure}[htbp]
\centering
\begin{tikzpicture}
\tikzset{->,>=stealth',shorten >=1pt,node distance=2.5cm,every state/.style={thick, fill=white},initial text=$ $}
\node[state,initial] (q0) {$\begin{matrix}a\\b\end{matrix}$};
\draw 
(q0) edge[loop below] node{3} (q0)
;
\end{tikzpicture}
\caption{The reduced graph of $\theta^2$}
\label{period_doubling_machine}
\end{figure}
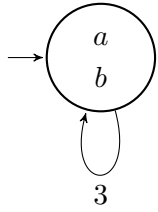

According to Theorem \ref{thm:main-result}, this confirms again that the aperiodic part of the sequence is $-1$, as the infinite loop $\bar{3}\cdot$ is the base-$4$ representation of $-1$.

In the following theorem we use $\pi_a$ to denote the element in $\mathcal A^{\mathcal A}$ which is the projection function $x\in \mathcal A \mapsto a$, and 
given $a\in \mathcal A$, let $p_a:\mathcal A^{\mathcal A}\rightarrow \mathcal A$ denote projection to the $a$-entry, $p_a(s):= s(a)$.

\begin{theorem}\label{thm:main-result}
Let $\theta$ be a primitive aperiodic length-$\ell$ substitution in simplified form,  and let  $\mathcal M_\theta$ be the semigroup  automaton associated to $\theta$.     Let   $\bfu$ be a fixed point, generated by $(\mathcal M_\theta, \{\omega_l, \omega_r\})$.  If $(X_\theta, \sigma)$ is Toeplitz, then 
\[\Per(\bfu) = \{n\in \mathbb Z: \delta(\id, (n)_\ell)= \pi_{u_n} \}\]
Hence the reduced graph of $\theta$ is a description of the set of semicocycle discontinuities of ${\bf u}$.      
\end{theorem}

\begin{proof}

We suppose, wlog,  that $n\in\N_0$ and that $n\in \Per(\bfu)$. By Theorem  \ref{thm:dekking}, $\theta$ has trivial height, and 
$(\Z_\ell, +1)$ is the maximal equicontinuous factor of $(X_\theta, \sigma)$. Next Theorem \ref{thm:Toeplitz} tells us that 
the set of essential periods is $\{\ell^j: j\in \N\}$. Thus if $n\in \Per(\bfu)$, then 
   for some  $j$ with $n<\ell^j$, $(u_{k\ell^j+n})_{k\in \mathbb Z} $   is a  constant sequence.
       In what follows we can pad  $(n)_\ell$ with some leading zeros if necessary so that the length of  $(n)_\ell$ equals $j$.  Note that this padding will not change the action of our automaton on fixed points. For, since $\theta_0$ is an idempotent, we have that for each $0\leq l \leq\ell -1$, 
$p_{u_0}(\theta_l\circ \theta_0^m (\id))=p_{u_0}( \theta_l\circ \theta_0(\id))$.

Writing $(n)_\ell= n_{j-1}\dots n_0 \cdot$,  since $(u_{k\ell^j+n})_{k\in \mathbb Z}$ is a constant sequence, then $\Lambda_{n_{j-1}} \circ \dots  \circ \Lambda_{n_0} (\bfu)$ is constantly equal to $a=u_n$. 
By definition of $\mathcal M_\theta$, this means that $p_{u_0} (\delta(\id,  (n)_\ell))= u_n$. We claim that  $\delta(\id,  (n)_\ell)= \pi_{u_n}$. Suppose not. Then there is a letter $v$ such that $p_v(  \delta(\id,  (n)_\ell))= b \neq u_n$.
Using the fact that $\theta$ is primitive, we know that $\theta^j(u_0)$ contains an occurrence of $v$; we will assume that  $j=1$ (otherwise we have to work with a larger $k$ in what follows, whose base-$\ell$ expansion has length $j$.)
Choose $i$ such that $\theta_i(u_0)=v$. Since our assumption is that $(u_{k\ell^j+n})_{k\in \mathbb Z} $ equals the constant sequence $\overline{u_n}$, then letting $k=i$, and noting that $(i\ell^j+n)_\ell= i(n)_\ell$, we have
\begin{align*}
u_n=u_{i\ell^j+n}= p_{u_0}( \delta(\id,  i(n)_\ell)). 
\end{align*}
On the other hand
\begin{align*}   p_{u_0}( \delta(\id,  i(n)_\ell)) =           p_{u_0}(      \theta_{n_0} \circ \dots \circ  \theta_{n_{j-1}}\circ  \theta_i (\id) ) &=      \theta_{n_0} \circ \dots \circ  \theta_{n_{j-1}}\circ p_{u_0} \theta_i (\id)\\& =    \theta_{n_0} \circ \dots \circ  \theta_{n_{j-1}}(v) \\& =  \theta_{n_0} \circ \dots \circ  \theta_{n_{j-1}} \circ p_v(\id) \\&=p_v ( \theta_{n_0} \circ \dots \circ  \theta_{n_{j-1}} (\id)) = b, \end{align*} 
a contradicition to our assumption that $b\neq u_n$. Therefore  $\delta(\id,  (n)_\ell)= \pi_{u_n}$.

The case when $n\in -\N$ is similar: we just explain how to proceed with $(n)_\ell$.  Recall that if  the natural number $-n$ has a base-$\ell$ expansion of length $j'$, then $(n)_\ell= \overline{(\ell -1)}(\ell^{j'} -n)_\ell$.  As we assume that the substitution is in simplified form, we can replace $(n)_\ell$ by $ (\ell -1)(\ell^{j'} -n)_\ell$, a word of length $j'+1$.
If  for some  $j$ with $|n|<\ell^j$, $(u_{k\ell^j+n})_{k\in \mathbb Z} $ is  a  constant sequence, we can assume, by padding $ (\ell -1)(\ell^{j'} -n)_\ell$ with extra copies of $\ell -1$, to obtain a word of length $j$ that will represent $(n)_\ell$. Now the rest of the proof is the same, except that we work with $p_{u_{-1}}$ instead of $p_{u_0}$.

Conversely, suppose that $\delta(\id, (n)_\ell)= \pi_{u_n}$.  Since  $\theta$ is in simplified form, we can assume that all base-$\ell$ expansions of integers are finite, by cropping off all but one most significant entry $\ell -1$ in the expansion of  a negative integer. Suppose that $(n)_\ell$ has length $j$. We will show that $ (u_{k\ell^j+n})_{k\in \mathbb Z}$ is constant.  
If  $k\in \Z$  then
\begin{align*}
u_{k\ell^j+n } = p_{u_0}(\delta( id, (k\ell^j+n)_\ell)) =  p_{u_0}(\delta(   \delta( id, (n)_\ell),(k)_\ell)) =  p_{u_0}(\delta( \pi_{u_n},(k)_\ell))=   p_{u_0}(\pi_{u_n}) = u_n,
\end{align*}
and the result follows.

\end{proof}
 
\bibliography{gandhar-reem}{}
\bibliographystyle{amsplain}

	\end{document}